\documentclass[a4paper,11pt,reqno]{amsart}
\usepackage{amsmath,amsfonts,amsthm,amssymb,enumerate}
\usepackage{graphicx}
\usepackage{comment}

\numberwithin{equation}{section}

\newtheorem{theorem}{Theorem}[section]
\newtheorem{corollary}[theorem]{Corollary}
\newtheorem{proposition}[theorem]{Proposition}

\newtheorem*{fact*}{Fact}
{
\theoremstyle{definition}
\newtheorem{definition}[theorem]{Definition}

\newtheorem{remark}[theorem]{Remark}

}



\title{Closure of knitted surfaces and surface-links}
\author{Inasa Nakamura}
\address{Department of Mathematics, Information Science and  Engineering,  
Saga University,  
1 Honjomachi, Saga, 840-8502, Japan.  
TEL: +81-952-28-8113}
\email{inasa@cc.saga-u.ac.jp}

\author{Jumpei Yasuda}
\address{Department of Mathematics, Graduate School of Science, Osaka University, 1-1, Machikaneyama, Toyonaka, Osaka 560-0043, Japan}
\email{u444951d@ecs.osaka-u.ac.jp}

\subjclass[2020]{Primary: 57K45, Secondary: 57Q35 57K10}
\keywords{surfaces in 4-space; braided surfaces; 2-dimensional braids; surface-links; tangles; knits; closure; plat closure; chart}

\pagestyle{plain}
\begin{document}
\begin{abstract}
A knitted surface is a surface with or without closed components smoothly properly embedded in $D^2 \times B^2$, which is a generalization of a braided surface. 
A knitted surface is called a 2-dimensional knit if its boundary is the closure of a trivial braid.  From a 2-dimensional knit $S$, we obtain a surface-link in $\mathbb{R}^4$ by taking the closure of $S$. We show that any surface-link is ambient isotopic to the closure of some 2-dimensional knit. 
Further, we consider another type of the closure of a knitted surface, called the plat closure. It is known that any trivial surface-knot is ambient isotopic to the plat closure of a knitted surface of degree 2. We show that the plat closure of any knitted surface of degree 2 is a trivial surface-link, and any trivial surface-link is ambient isotopic to the plat closure of a knitted surface of degree $2$. We also show the same result for the closure of 2-dimensional knits of degree 2. 

\end{abstract}
\maketitle

\section{Introduction}\label{sec1}

Let $n$ be a positive integer. 
An {\it $n$-knit} \cite{Birman-Wenzl, Murakami} is a tangle obtained from an $n$-braid in a cylinder $D^2 \times I$ by splicing some crossings. Splicing a crossing results in a \lq\lq hook pair'', and we describe the information of each hook pair 
by a segment called a pairing \cite{NY}. We give an equivalence relation to the set of $n$-knits so that two $n$-knits are equivalent if they are related by an isotopy of $n$-knits. 
Let $p: D^2 \times I \to D^2$ be the projection, and for an $n$-knit $\beta$, we define $\beta_{[t]}=p(\beta \cap (D^2 \times \{t\}))$. 
For two $n$-knits $\beta_0$ and $\beta_1$ in $D^2 \times I$, we define the {\it composition} $\beta_0 \beta_1$ as the $n$-knit given, up to smoothing, by 
\[
(\beta_0 \beta_1)_{[t]}=\begin{cases}\beta_{0[2t]} & (t \in [0,1/2]) \\
\beta_{1[2t-1]} & (t \in [1/2,1]) 
\end{cases}
\]
for $t \in [0,1]$. The set of equivalence classes of $n$-knits forms a monoid $D_n$
by the multiplication induced from the composition of $n$-knits, which is called the \textit{$n$-knit monoid}. 
    The $n$-knit monoid $D_n$ is generated by $3(n-1)$ elements, $\sigma_1, \ldots, \sigma_{n-1}$, $\sigma_1^{-1}, \ldots, \sigma_{n-1}^{-1}$, $\tau_1,\ldots, \tau_{n-1}$, where $\sigma_i$ and $\sigma_i^{-1}$ are the $i$th standard generator and its inverse of the $n$-braid group $B_n$, and $\tau_i$ is the hook pair between the $i$th and $(i+1)$th strings as in Figure \ref{figure1} $(i=1, \ldots, n-1)$. 
  Let $e$ be the unit element of $D_n$, which is represented by the trivial braid. 
  The $n$-knit monoid $D_n$ has the monoid presentation

    \begin{align*}
        \left\langle
        \begin{array}{c|}
            \sigma_1, \ldots, \sigma_{n-1},\\ \sigma_1^{-1}, \ldots, \sigma_{n-1}^{-1},\\ \tau_1,\ldots, \tau_{n-1}
        \end{array}
        \begin{array}{l}
            \sigma_i \sigma_{i}^{-1} = \sigma_i^{-1} \sigma_i = e,~
            \sigma_i \tau_i = \tau_i \sigma_i = \tau_i,\\
            \sigma_i\sigma_j = \sigma_j \sigma_i,~
            \sigma_i \tau_j = \tau_j \sigma_i,~
            \tau_i \tau_j = \tau_j \tau_i \ \ (|i-j|>1), \\
            \sigma_i\sigma_j\sigma_i = \sigma_j\sigma_i\sigma_j,~
            \sigma_i \sigma_j \tau_i = \tau_j \sigma_i \sigma_j \ \ (|i-j|=1)
        \end{array}
        \right\rangle_{\mathrm{monoid}}.
    \end{align*}
    
\begin{figure}[ht]
\includegraphics*[height=3.5cm]{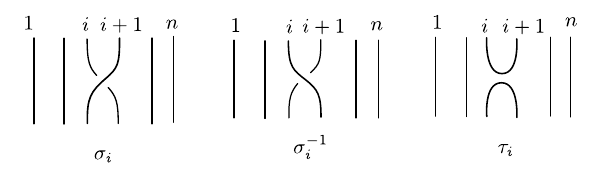}
\caption{Generators $\sigma_i$, $\sigma_i^{-1}$ and $\tau_i$ of the monoid $D_n$ of $n$-knits.
}\label{figure1}
\end{figure}

A knitted surface \cite{NY} in a bidisk $D^2 \times B^2$ is defined 
as an analogue to the notion of an $n$-knit in $D^2 \times I$. 
A knitted surface is an extended notion of a braided surface  \cite{Rudolph}, which is a surface in $D^2 \times B^2$ in the form of a branched covering over $B^2$. A knitted surface of degree $n$ is constructed as the trace of deformations of $n$-knits, as follows. Let $\beta_0 \to \beta_1 \to \beta_2 \to \cdots \to \beta_m$ be a sequence of elements of the free monoid consisting of $\sigma_i, \sigma_i^{-1}$ and $\tau_i$ $(i=1. \ldots, n-1)$ 
such that $\beta_{j-1}$ and $\beta_j$ are related by one of the following for each $j$ $(j=1, \ldots, m)$: 
\begin{enumerate}[(1)]
\item
One of the relations of the $n$-knit monoid $D_n$.  
\item
Replacement of $e$ and $\sigma_i^\epsilon$, or replacement of $e$ and $\tau_i$, for some $i$ and $\epsilon \in \{+1, -1\}$. 
\item
Replacement of $\tau_i$ and $\tau_i \tau_i$ for some $i$. 
\end{enumerate}
We decompose $I=[0,1]$ into $m$ subintervals: $I=I_1 \cup \cdots \cup I_m$, $I_j=[(j-1)/m, j/m]$ $(j=1,\ldots, m)$. 
A knitted surface $S$ of degree $n$ in $D^2 \times I \times I$ is constructed for each $S \cap (D^2 \times I \times I_j)$ as follows. We denote by the same notation $\beta_j$ a geometric $n$-knit given by the presentation $\beta_j$.  
For case (1), let $\{S_{[t]}\}_{t\in I_j}$ be an isotopy of $n$-knits in $D^2\times I$ relating $\beta_{j-1}$ and $\beta_j$ which describes the $n$-knit monoid relation. Then we construct $S \cap (D^2 \times I \times I_j)$ by $\cup_{t \in I_j} S_{[t]} \times \{t\}$. This construction is called the {\it trace} of $\{S_{[t]}\}_{t\in I_j}$. 
The construction is determined by giving $S_{[t]}$, which is called a {\it slice} of $S$ at $t$. 

For case (2), we construct $S \cap (D^2 \times I \times I_j)$ by giving slices as follows: $S_{[t]}=\beta_{j-1}$ $(t \in [(j-1)/m, (2j-1)/2m))$, $S_{[t]}=\beta_j$ $(t \in ((2j-1)/2m, j/m])$ and $S_{[t]}$ $(t=(2j-1)/2m)$ is the union of $\beta_{j-1}$ and a twisted band (respectively, an untwisted band) corresponding to the $\sigma_i^\epsilon$ (respectively $\tau_i$). See Figure \ref{figure-branch}. 
The $n$-knit $\beta_j$ is the result of band surgery along the band attached to $\beta_{j-1}$, and when the band is a twisted band, it corresponds to a branch point of the knitted surface. 

For case (3), let $S_{[t]}=\beta_{j-1}$ $(t \in [(j-1)/m, (2j-1)/2m))$, $S_{[t]}=\beta_j$ $(t \in ((2j-1)/2m, j/m])$ and let $S_{[t]}$ $(t=(2j-1)/2m)$ be the union of $\beta_{j-1}$ and a disk whose boundary is the simple closed curve formed by the $\tau_i \tau_i$. See Figure \ref{figure-tau}. 
The knitted surface is the trace of deformations of $n$-knits where a simple closed curve is born or dies. 
\\

A {\it surface-link} is a closed surface smoothly embedded in $\mathbb{R}^4$. In particular, a surface-link with one component is called a {\it surface-knot}. A braided/knitted surface is called a {\it 2-dimensional braid/knit} if its boundary is the closure of the trivial braid.
From a 2-dimensional braid $S$, we obtain an orientable surface-link in $\mathbb{R}^4$ by taking the closure of $S$,  and Kamada \cite{Kamada94} showed that every orientable surface-link is ambient isotopic to the closure of some 2-dimensional braid.
For a 2-dimensional knit $S$, we can also define the closure of $S$ by a similar way, which provides a surface-link. 
In this paper, we show Claim 9.1 in \cite{NY} as follows. 

\begin{theorem}\label{Theorem: Alexander theorem for surface-links}
    Every surface-link in $\mathbb{R}^4$ is ambient isotopic to the closure of some 2-dimensional knit.
 \end{theorem}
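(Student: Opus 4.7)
The plan is to adapt Kamada's braiding technique for 2-dimensional braids \cite{Kamada94} to the knit setting, using the generators $\tau_i$ to accommodate non-orientable features. Given a surface-link $F\subset\mathbb{R}^4$, I would first choose a generic linear function $h:\mathbb{R}^4\to\mathbb{R}$ so that $h|_F$ is Morse with only births of a trivial circle, saddle band attachments, and deaths of a trivial circle, and so that $F\subset\mathbb{R}^3\times[0,1]$ with $F_s:=F\cap h^{-1}(s)=\emptyset$ for $s\notin(0,1)$. Applying Alexander's theorem slicewise, after a further ambient isotopy each non-critical slice $F_s$ is a closed $n$-braid with respect to a fixed braid axis in $\mathbb{R}^3$; by stabilization I may assume the braid index is a common integer $n$ on every non-critical level.

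From this motion picture I would read off a sequence $\beta_0\to\beta_1\to\cdots\to\beta_m$ of words in $\sigma_i^{\pm1},\tau_i$ with $\beta_0=\beta_m=e$. Non-critical braid isotopies between consecutive slices are absorbed into the monoid relations of $D_n$ (type (1) moves). A birth or death of a trivial circle is implemented by a type (3) move, inserting or removing a $\tau_i\tau_i$ loop bounded by a disk. A saddle band is implemented by a type (2) move: if the band sits as a half-twisted band between two parallel braid strands it yields the insertion of a $\sigma_i^\epsilon$, while if it sits as an untwisted band joining the two strands of a $\tau_i$ hook pair it yields the insertion of a $\tau_i$. Since $\beta_0=\beta_m=e$, the trace of this sequence is by construction a 2-dimensional knit $S\subset D^2\times B^2$, and its closure is ambient isotopic to $F$.

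The main obstacle is in realizing \emph{non-orientable} saddles, since Kamada's braiding produces only orientable 2-dimensional braids; a non-orientable saddle in $F$ cannot be presented as a $\sigma_i^\epsilon$ insertion because the corresponding band would be an untwisted band between two parallel braid strands, incompatible with the braided structure. The device that circumvents this is the $\tau_i$ hook pair: before reaching such a saddle, apply a type (3) move to introduce a local $\tau_i\tau_i$ loop on a pair of consecutive strands; then use an ambient isotopy (governed by the monoid relations, especially $\sigma_i\tau_i=\tau_i\sigma_i=\tau_i$ and $\sigma_i\sigma_j\tau_i=\tau_j\sigma_i\sigma_j$ for $|i-j|=1$) to place the saddle's attaching region onto the $\tau_i$ hook pair, and insert a $\tau_i$-band by a type (2) move; afterwards, cancel any leftover hook pair via the inverse of type (3). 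Carrying out this local modification at each non-orientable saddle, together with Kamada's analysis of orientable saddles, yields the desired 2-dimensional knit whose closure recovers $F$.
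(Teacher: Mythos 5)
There is a genuine gap. The load-bearing step in your sketch is the sentence asserting that, after a further ambient isotopy, every non-critical slice $F_s$ becomes a closed braid about a fixed axis with a common index $n$, and that the resulting family of braid words can then be read off as a sequence of knit transformations. Applying the classical Alexander theorem level by level does not give this: the levelwise braiding isotopies are non-canonical, need not vary continuously in $s$, and do not assemble into a single ambient isotopy of $\mathbb{R}^4$; moreover, stabilizations needed at different levels must be interpolated through the surface (changing the degree is a global operation on the whole surface, not something you can perform independently at each level), and each saddle band must be isotoped into braided/knitted position \emph{relative to} the closed-braid structure of the nearby slices. Making all of this coherent is exactly the content of Kamada's braiding theorem (Theorem \ref{Theorem: Alexander theorem for orientable surface-links}), whose proof is not slicewise but proceeds via a normal form and a band-braiding lemma, and which is only available for orientable surface-links, so it cannot be invoked as a black box here. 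Your proposal in effect promises a knit analogue of that theorem (including a knit version of the braided-band lemma guaranteeing that every saddle band, orientable or not, can be slid onto adjacent strands while every slice stays a knit), but no argument is given for it. A smaller point: once one is in the knit setting, an untwisted band between two \emph{parallel} adjacent strands is precisely the transformation $e\to\tau_i$ permitted in a knitted surface, so the detour through inserting a $\tau_i\tau_i$ loop is unnecessary; but this does not repair the main omission.

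For comparison, the paper avoids any rebraiding argument: it quotes Yasuda's plat form theorem \cite{Yasuda21} (every surface-link is the plat closure of an adequate braided surface of some degree $2n$), which already contains all the braiding work, and then converts that plat closure into the closure of a 2-dimensional knit. Concretely, the cap and cup disk systems of the plat closure are replaced by knit pieces realizing $\tau=\tau_1\tau_3\cdots\tau_{2n-1}$ (bands taking $e$ to $\tau$ and trivial disks), the exchange being justified by the uniqueness of trivial disk systems rel boundary \cite{KSS}; when the boundary braid of the adequate braided surface is nontrivial, an auxiliary knitted surface from \cite{NY} interpolating $\tau\gamma$ to $\tau$ caps it off. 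If you wish to pursue your direct route, you would need to formulate and prove the parametrized braiding and band-positioning statements above in the knit category; as written, they are asserted rather than proved.
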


Further, we consider another type of the closure of a knitted surface, called the plat closure. In \cite[Theorem 8.1]{NY}, we showed that any trivial surface-knot is ambient isotopic to the plat closure of some knitted surface of degree 2. In this paper, we show the following.

\begin{theorem}\label{thm20241213}
 The plat closure of any knitted surface of degree 2 is a trivial surface-link, and any trivial surface-link is ambient isotopic to the plat closure of a knitted surface of degree 2. 
 \end{theorem}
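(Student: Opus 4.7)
Let $S$ be a knitted surface of degree 2 with plat closure $\widehat{S}$. The plan is to produce an explicit compact 3-dimensional submanifold $R \subset \mathbb{R}^4$ bounded by $\widehat{S}$ and show that $R$ is a disjoint union of 3-dimensional handlebodies; this forces $\widehat{S}$ to be a trivial surface-link. First I would verify that the plat closure of any 2-knit is a trivial link: for a pure 2-braid the twists slip off over the plat caps, and each hook pair $\tau_1$ contributes one extra unknotted component. Hence every slice $\widehat{S}_{[t]}$ is a trivial link, and I can take $R_{[t]}$ to be the union of 2-disks bounded by its unknot components, chosen to vary continuously in $t$. As $t$ crosses the critical times indexed by the moves in the sequence $\beta_0 \to \cdots \to \beta_m$ defining $S$, $R$ changes by standard local modifications: an isotopy (move (1)), a 1-handle attached by a twisted or untwisted band (move (2) for $\sigma_1^{\pm 1}$ or $\tau_1$), and a birth-and-cap of a 3-ball component (move (3)). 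Compiling these local pictures gives a handlebody decomposition of $R$, so $\widehat{S} = \partial R$ is trivial.

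\textbf{Plan for Part 2.} Write $L = F_1 \sqcup \cdots \sqcup F_k$, where each $F_i$ is a trivial surface-knot of genus $g_i$. The plan is to induct on $k$, with base case $k = 1$ given by \cite[Theorem 8.1]{NY}. For the inductive step, suppose $S'$ is a degree-$2$ knitted surface whose plat closure is $F_1 \sqcup \cdots \sqcup F_{k-1}$. I would extend its defining sequence by appending a bubble
\[
e \to \tau_1 \to \tau_1 \tau_1 \to \cdots \to \tau_1 \tau_1 \to \tau_1 \to e,
\]
where the middle part implements the construction of \cite[Theorem 8.1]{NY} on the inner pair of strands produced by the $\tau_1 \tau_1$ in order to realize the trivial surface-knot $F_k$ on the disjoint circle born at the bubble. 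The plat closure of the extended knitted surface is then $\widehat{S'} \sqcup F_k \cong L$.

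\textbf{Main obstacle.} The principal technical step, which I expect to be the hardest, is verifying the locality claim in Part 2: the moves of \cite[Theorem 8.1]{NY} applied to the two inner strands of the $\tau_1 \tau_1$ bubble must leave the surrounding knit configuration undisturbed, and must produce $F_k$ as a genuinely disjoint component of $\widehat{S}$, rather than as a connected sum or a handle attached to another component. This should follow from the purely local nature of moves (1), (2), (3), but requires a careful check against the details of the construction in \cite[Theorem 8.1]{NY}.
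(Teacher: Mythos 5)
Your Part~1 has a genuine gap. The plan rests on exhibiting a compact $3$-manifold $R\subset\mathbb{R}^4$ with $\partial R=\widehat{S}$ and on the criterion ``bounds a disjoint union of handlebodies $\Rightarrow$ trivial''. That criterion is a characterization of trivial \emph{orientable} surface-links only, and the nonorientable case is unavoidable here: every letter created or deleted through $e\leftrightarrow\sigma_1^{\pm1}$ is a branch point of the knitted surface (a half $\sigma$-edge of its $2$-chart), and with an odd number of branch points the plat closure is a nonorientable surface of \emph{odd} Euler characteristic --- for instance, by Theorem \ref{prop6-5} the chart consisting of a single positive half $\sigma$-edge has plat closure $P_+$. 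A closed surface of odd Euler characteristic bounds no compact $3$-manifold in $\mathbb{R}^4$ whatsoever, so no choice of slice disk systems can be compiled into your $R$; concretely, your local modification scheme breaks exactly at the half-twisted band of a branch point, where the disk systems before and after cannot be glued into an embedded $3$-manifold. Even when the plat closure is nonorientable with even Euler characteristic (e.g.\ $P_+\# P_-$), you have no handlebody criterion to fall back on, and your proposal supplies no substitute. This is precisely the difficulty the paper's Theorem \ref{thm6-7} is designed to handle, by an entirely different route: put the $2$-chart in normal form (Proposition \ref{thm6-3}), replace every non-half $\sigma$-edge by a $\tau$-edge so that the plat closure becomes a split union of the model surfaces of Theorem \ref{prop6-5}, and recover $F$ from this split union by $1$-handle surgeries; since $1$-handles attached to trivial surface-knots are trivial \cite{Kamada2014}, each surgery is a connected sum with $T$ or with $P_+\# P_-$, and triviality follows without $F$ ever having to bound a $3$-manifold.

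Your Part~2 is essentially the paper's argument (Corollary \ref{cor5-5}): the single-component case is quoted from \cite[Theorem 8.1]{NY} (Theorem \ref{prop6-5} here), and the components are then split off by $\tau$-separation. The paper does this in chart language, dividing $B^2$ by $\tau$-arcs $l_1,\ldots,l_{m-1}$ into disks $E_j$ carrying the charts $\Gamma_j$ of the components; the fact that subcharts lying in different regions of the $\tau$-chart contribute split components of the plat closure is exactly the ``locality'' issue you single out as your main obstacle, and the region decomposition makes it immediate, whereas in your bubble formulation $e\to\tau_1\to\tau_1\tau_1\to\cdots\to e$ you would still have to prove it (note also that the circle born from $\tau_1\tau_1$ is a closed slice component, not a pair of plat-capped strands, so \cite[Theorem 8.1]{NY} does not apply to it verbatim and the same chart-region argument is needed). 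That part of your plan is repairable; the argument that actually needs to be replaced is Part~1.
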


Theorem \ref{thm20241213} follows from Corollary \ref{cor5-5} and Theorem~\ref{thm6-7}, where Theorem \ref{thm6-7} is \cite[Claim 8.3]{NY}. 
We show this by using a chart description of a knitted surface of degree 2, which is a generalization of a chart description of a simple braided surface \cite{Kamada96, Kamada92}. As a corollary of Theorem \ref{thm20241213}, we have the following. 

\begin{corollary}\label{thm20241217}
 The closure of any 2-dimensional knit of degree 2 is a trivial surface-link, and any trivial surface-link is ambient isotopic to the closure of some 2-dimensional knit of degree 2. 
 \end{corollary}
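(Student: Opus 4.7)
The plan is to deduce Corollary~\ref{thm20241217} from Theorem~\ref{thm20241213} by establishing a direct correspondence, in degree $2$, between 2-dimensional knits equipped with their closure and knitted surfaces equipped with their plat closure. The motivation is that both the closure of a 2-dimensional knit of degree $2$ and the plat closure of a knitted surface of degree $2$ attach exactly two disks to the surface $S$ inside $\mathbb{R}^4$: the closure caps the two parallel boundary circles lying in $D^2\times\partial B^2$, while the plat closure caps the two strings paired along two opposite edges of $\partial B^2$. Since only two strings are involved in each case, I expect the two cap-off patterns to be interchangeable by an ambient isotopy supported in a collar of $\partial(D^2\times B^2)$.

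Concretely, my first step is to prove the following conversion lemma: for every 2-dimensional knit $S$ of degree $2$ there exists a knitted surface $S'$ of degree $2$, obtained from $S$ by a modification supported near the boundary, such that the plat closure of $S'$ is ambiently isotopic to the closure of $S$, and conversely, every knitted surface of degree $2$ can be converted into a 2-dimensional knit of degree $2$ whose closure agrees with its plat closure. Granting this, the corollary is immediate: for the first half, given any 2-dimensional knit of degree $2$, its closure equals the plat closure of a knitted surface of degree $2$ and is therefore a trivial surface-link by Theorem~\ref{thm20241213}; for the second half, given any trivial surface-link $F$, Theorem~\ref{thm20241213} represents $F$ as the plat closure of some knitted surface of degree $2$, and the conversion re-presents $F$ as the closure of some 2-dimensional knit of degree $2$.

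The principal obstacle is the conversion lemma: one must verify that the boundary re-routing extends to a modification of the knitted-surface structure throughout $D^2\times B^2$ without inflating the degree above $2$. I would carry this out using the chart description alluded to after the statement of Theorem~\ref{thm20241213}. Because the monoid $D_2$ is generated only by $\sigma_1^{\pm 1}$ and $\tau_1$, subject to the minimal relations $\sigma_1\sigma_1^{-1}=\sigma_1^{-1}\sigma_1=e$ and $\sigma_1\tau_1=\tau_1\sigma_1=\tau_1$, the chart of a degree-$2$ knitted surface is especially simple, and I expect to realize the required re-routing of boundary chart edges by a finite sequence of local chart moves near $\partial B^2$, possibly inserting trivial $\tau_1\tau_1\leftrightarrow e$ births or deaths to align boundary combinatorics. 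A secondary check, easier but still needed, is that after the conversion the boundary of the resulting 2-dimensional knit is still the closure of the trivial braid, which in degree $2$ reduces to verifying that the two boundary arcs assemble into two disjoint parallel circles.
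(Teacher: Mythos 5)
The second direction of your conversion lemma (from a knitted surface of degree $2$ to a $2$-dimensional knit whose closure is its plat closure) is essentially Remark~\ref{remark1223} of the paper, and together with Corollary~\ref{cor5-5} it does yield the second statement of the corollary. The genuine gap is in the first direction, and the picture motivating it is incorrect. The plat closure of a knitted surface of degree $2$ does not attach two disks: when $\partial S$ is the closure of a $2$-braid $\beta$ the cap is the surface $A_\beta$ swept out by a wicket, and when $\partial S$ contains $m$ hook pairs the cap is $m$ disks. Consequently the two cap-off patterns are not interchangeable by an isotopy supported in a collar of $\partial(D^2\times B^2)$: for the braided surface of degree $2$ presented by a single free $\sigma$-edge, the closure is an unknotted $2$-sphere, whereas its plat closure is $P_+\#P_-$ by Theorem~\ref{prop6-5}; even for the trivial surface $Q_2\times B^2$ the closure is two spheres while the plat closure is one sphere. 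So the closure and the plat closure of one and the same degree-$2$ surface need not even be homeomorphic, and any conversion must genuinely change $S$ rather than re-route its boundary.

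Concretely, your proposed mechanism has nothing to act on: the chart of a $2$-dimensional knit has no boundary points, so there are no boundary chart edges near $\partial B^2$ to re-route. The only way to turn a closure into a plat closure is through a $\tau$-edge of the chart (removing a small disk meeting a $\tau$-edge in an arc, i.e.\ reversing the construction of Remark~\ref{remark1223}); this is exactly how the paper handles charts containing at least one $\tau$-edge, after which Theorem~\ref{thm6-7} applies. When the chart contains no $\tau$-edge, the $2$-dimensional knit is a $2$-dimensional braid of degree $2$, and no boundary-local conversion is available; moreover, inserting $\tau$-edges is not a harmless adjustment of boundary combinatorics, since by Theorem~\ref{prop6-5} a free $\tau$-edge changes the resulting surface-link (it adds a torus summand), so it cannot play the role of a trivial birth or death. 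The paper disposes of this residual case by the separate known fact that the closure of any $2$-dimensional braid of degree $2$ is a trivial orientable surface-link \cite{Kamada02}. Your proposal needs this input (or a substitute argument) and does not supply it, so the first half of the corollary --- that the closure of every $2$-dimensional knit of degree $2$ is trivial --- is not established.
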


The paper is organized as follows.
In Section \ref{sec-knitted-surface}, we review knitted surfaces. 
In Section \ref{sec3}, we review the closure of 2-dimensional knits, and we show Theorem \ref{Theorem: Alexander theorem for surface-links}. 
In Section \ref{sec5}, we review a chart description of knitted surfaces of degree 2. 
In Section \ref{sec8}, we show Theorems \ref{thm6-7} and \ref{thm20241213} and Corollary \ref{thm20241217}. 
We give references \cite{CKS, Kamada02, Kawauchi} for the basics of 1-dimensional and 2-dimensional knot theory. 
In this paper, we assume that surfaces and tangles are smooth.

\section{Knitted surfaces}\label{sec-knitted-surface}
  In this section, we review knitted surfaces \cite{NY}.

\subsection{Motion picture method}\label{sec2}
In order to describe surfaces in 4-space, we use motion pictures, regarding surfaces as the trace of slices of the surfaces by 3-spaces along the 4th dimensional axis.
In this subsection, we review motion picture method.

Let $S$ be a surface properly embedded in $D^2  \times B^2$, and we denote $B^2=I_3 \times I_4$, where $I_3=I_4=[0,1]$.
We modify the saddle points and minimal/maximal points of $S$  into saddle bands and minimal/maximal disks, repsectively; 
see \cite[Section 8.4]{Kamada02}.
Let $p: D^2 \times B^2 \to D^2 \times I_3$ be the projection. 
For each $t \in I_4$, the \textit{slice} of $S$ at $t$, denoted by $S_{[t]}$, is defined as the image by $p$ of the intersection $S \cap (D^2\times I_3 \times \{t\})$: that is, $S_{[t]}=p(S \cap (D^2 \times I_3 \times \{t\}))$. 

When each slice $S_{[t]}$ is either (1) a tangle (or a link) or (2) a union of a tangle and several disks (and bands), we describe $S$ by a finite sequence of diagrams of $S_{[t_0]}, \ldots, S_{[t_m]}$ consisting of unions of (2) and tangles before and after the time of (2). This sequence of diagrams is called a {\it motion picture} of $S$. 

\subsection{Band surgery of a tangle along a band set}
Let $\beta$ be a tangle in $D^2 \times I$.
A {\it band} attached to $\beta$ is a 2-disk $B$ in $D^2 \times I$ such that $B \cap \beta$
consists of a pair of intervals in $\partial B$. The result of {\it band surgery along $B$}, denoted by $h(\beta; B)$, is the tangle 
$\mathrm{Cl}((\beta\cup \partial B)\backslash (\beta \cap B))$. We remark that by a slight modification we assume that $h(\beta; B)$ is also smoothly embedded. 

Let $\mathcal{B}=\{B_1, \ldots, B_m\}$ be a set of mutually disjoint bands attached to $\beta$. Put $\cup \mathcal{B}=B_1 \cup B_2 \cup \cdots \cup B_m$. The result of {\it band surgery along bands of $\mathcal{B}$} is given by 
\[
h(\beta; \mathcal{B}):=\mathrm{Cl}((\beta\cup \partial (\cup \mathcal{B}))\backslash (\beta \cap (\cup \mathcal{B}))). 
\]

\subsection{Knitted surface of degree $n$ presented by a transformation of $n$-knits}
For an $n$-knit $\beta$, we regard $\beta$ as a geometric $n$-knit and we identify $\beta$ with its presentation  $s_1\cdots s_m$ where $s_i\in \{e, \sigma_1^{\pm 1}, \ldots, \sigma_{n-1}^{\pm 1}, \tau_1, \ldots, \tau_{n-1}\}$ for each $i$. 
We call $s_1 \cdots s_m$, $s_i$ ($i=1, \ldots, m$), and $w = s_{n_1}s_{n_1 +1} \cdots s_{n'_1}$ ($1 \leq n_1 \leq n_1' \leq m$) a \textit{word} of $\beta$, a \textit{letter} of $\beta$, and a \textit{subword} of $\beta$, respectively.
        For subwords $w$ and $w'$ of two $n$-knits $\beta$ and $\beta'$, respectively, we call a sequence $w \to w'$ a \textit{transformation} if $\beta'$ is obtained from $\beta$ by changing $w$ to $w'$.
        In this situation, we call a sequence $\beta \to \beta'$ a \textit{transformation} of an $n$-knit.
        For $l$ subwords $w_1, \cdots, w_l$ of $\beta$ and $l$ transformations $w_1 \to w_1'$, \ldots, $w_l \to w_l'$, we say that the transformations are {\it mutually disjoint} if $w_i$ and $w_j$ do not overlap each other for $i \neq j$, that is, when $w_i=s_{n_i} \cdots s_{n_i'}$ and $w_j=s_{n_j} \cdots s_{n_j'}$, the intersection  $[n_i, n_i']\cap [n_j, n_j']$ of intervals is the empty set. 

When we have a transformation of $n$-knits 
$\beta \to \beta'$ which is one of (Case 1)--(Case 3) as below, we construct a knitted surface $S$ of degree $n$ in $D^2 \times B^2$ in such a way as follows; since the construction is unique up to equivalence, we call $S$ the {\it knitted surface presented by $\beta \to \beta'$}. In the following (Case 2) and (Case 3), $\beta \leftrightarrow \beta'$ denotes the sequences $\beta \to \beta'$ and $\beta' \to \beta$. 

\noindent
{\bf (Case 1)} A transformation $\beta \to \beta'$, where $\beta$ and $\beta'$ is related by an isotopy of $n$-knits $\{\beta_t\}_{t \in [0,1]}$: $\beta_0=\beta$ and $\beta'=\beta_1$. 

In this case, we define $S$  by 
\[
S_{[t]}=\beta_t
\]
for any $t \in [0,1]$. 

\noindent
{\bf (Case 2)} A transformation $e \leftrightarrow \sigma_i^\epsilon$ or $e \leftrightarrow \tau_i$ $(\epsilon \in \{+1, -1\}$). This is a transformation $\beta \to \beta'$ such that 
$\beta'$ is obtained from $\beta$ by band surgery along a band as in Figure \ref{figure-branch}. 

\begin{figure}[ht]
\includegraphics*[height=3cm]{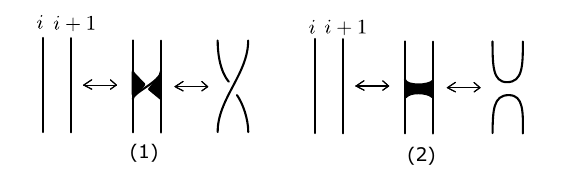}
\caption{Motion pictures of the knitted surfaces presented by (1) $e \leftrightarrow \sigma_i$, and (2) $e \leftrightarrow \tau_i$, where we omit the $j$th strings for $j \neq i, i+1$. This figure is \cite[Figure 2]{NY}.
}
\label{figure-branch}
\end{figure}

In this case, we define $S$ by the motion picture as given in Figure \ref{figure-branch}; we define $S$ by slices as follows: for $e \to s_i$ $(s_i \in \{\sigma_i^\epsilon, \tau_i\})$, we denote by $B$ a band attached to $e$ so that the result of the band surgery along $B$ is $s_i$. Then we define $S$ by 
\[
S_{[t]}=\begin{cases}
e & (0 \leq t <\frac{1}{2})\\
e \cup B & (t=\frac{1}{2})\\
s_i & (\frac{1}{2}<t\leq 1),
\end{cases}
\]
and for $s_i \to e$, the presented surface $S'$ is given by the mirror image of $S$ with respect to $D^2 \times I_3 \times \{1/2\}$, that is, the surface $S'$ given by the slice $S'_{[t]} =S_{[1-t]}$ for each $t \in [0,1]$. 

\noindent
{\bf (Case 3)} A transformation $\tau_i \leftrightarrow \tau_i\tau_i$. 
In this case, we define $S$ by the motion picture as given in Figure \ref{figure-tau}; we define $S$ by slices as follows: we define $S$ for $\tau_i \to \tau_i\tau_i$; the other case is given by the same method as in (Case 2). We denote by $D$ a disk whose boundary is the simple closed curve in $\tau_i \tau_i$. Further, we assume that $\tau_i=\tau_i \tau_i \backslash \partial D$. Then we define $S$ by 
\[
S_{[t]}=\begin{cases}
\tau_i & (0 \leq t <\frac{1}{2})\\
\tau_i \cup D & (t=\frac{1}{2})\\
\tau_i \tau_i & (\frac{1}{2}<t\leq 1). 
\end{cases}
\]

\begin{figure}[ht]
\includegraphics*[height=3cm]{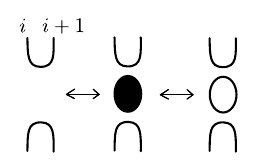}
\caption{Motion picture of the knitted surface presented by $\tau_i \leftrightarrow \tau_i\tau_i$, where we omit the $j$th strings for $j \neq i, i+1$. This figure is similar to \cite[Figure 3]{NY}. 
}\label{figure-tau}
\end{figure}

For a transformation $\beta \to \beta'$ such that $\beta'$ is obtained from $\beta$ by a finite number of mutually disjoint transformations of (Case 1)--(Case 3), the knitted surface presented by $\beta \to \beta'$ is defined as the surface determined from those of (Case 1)--(Case 3). 
We remark that the construction is called the \lq\lq vertical product'' \cite[Section 15.1]{Kamada02}.

\subsection{Knitted surfaces}\label{subsec-knitted-surface}

Knitted surfaces are defined by using \lq\lq pairings'' and \lq\lq knit structure'' \cite{NY}. Two knitted surfaces are said to be equivalent if they are related by an \lq\lq isotopy of knitted surfaces''. Every knitted surface is equivalent to a knitted surface in a normal form. 
In this paper, we give a knitted surface as a knitted surface of degree $n$ derived from a normal form, by using deformations of $n$-knits.

\begin{definition}\label{Definition: knitted surface in normal form}
Let $S$ be a surface properly embedded in $D^2 \times B^2$. 
Then, $S$ is called a {\it knitted surface} of degree $n$ if there exists a partition $0=t_0<t_1< \ldots <t_{m}=1$ for some $m$ satisfying the following conditions, where $\beta_k=S_{[t_k]}$ $(k=0,\ldots, m)$. 
    \begin{enumerate}
        \item Each slice $\beta_k$ is an $n$-knit $(k=0, \ldots, m)$. 
        \item The surface $S \cap (D^2 \times I_3 \times [t_{k-1}, t_k])$ is the knitted surface presented by $\beta_{k-1} \to \beta_k$ ($k=1, \dots, m$) such that the knit $\beta_{k}$ is obtained from $\beta_{k-1}$ by a finite number of mutually disjoint transformations associated with the relations of the $n$-knit monoid $D_n$ as follows $(k=1, \ldots, m)$, where $\beta \leftrightarrow \beta'$ denotes the sequences $\beta \to \beta'$ and $\beta' \to \beta$:
\begin{itemize}
\item \ $e \leftrightarrow \sigma_i^\epsilon$,  
\item \  $e \leftrightarrow \tau_i$,   
\item $\tau_i \tau_i \leftrightarrow \tau_i, $
\item $\tau_i \leftrightarrow \sigma_i^\epsilon \tau_i,$
\ $\tau_i \leftrightarrow \tau_i \sigma_i^\epsilon,$
\item $\sigma_i^\epsilon \sigma_i^{-\epsilon} \leftrightarrow e$, 
\item 
$\sigma_i^\epsilon \sigma_j^\delta \leftrightarrow \sigma_j^\delta \sigma_i^\epsilon \quad  (|i-j|>1),  $
\item
$\sigma_i^\epsilon \tau_j \leftrightarrow \tau_j \sigma_i^\epsilon \quad  (|i-j|>1)$,  \item $\tau_i \tau_j \leftrightarrow \tau_j \tau_i \quad  (|i-j|>1)$,  \
\item $\sigma_i^\epsilon \sigma_j^\epsilon \sigma_i^\epsilon \leftrightarrow \sigma_j^\epsilon \sigma_i^\epsilon \sigma_j^\epsilon,\ \sigma_i^\epsilon \sigma_j^\epsilon \sigma_i^{-\epsilon} \leftrightarrow \sigma_j^{-\epsilon} \sigma_i^{\epsilon} \sigma_j^\epsilon \quad  (|i-j|=1),$ 
\item $\sigma_i^\epsilon \sigma_j^\epsilon \tau_i \leftrightarrow \tau_j \sigma_i^\epsilon \sigma_j^\epsilon  \quad  (|i-j|=1)$,  
\end{itemize}
where $\epsilon,\delta \in \{+1, -1\}$, $i,j \in \{1,\ldots, n-1\}$.
\end{enumerate}

We denote by $Q_n$ the starting point set of $n$-knits, that is a fixed set of $n$ interior points of $D^2$ such that $\beta \cap (D^2 \times \partial I_3)=Q_n \times \partial I_3$ for any $n$-knit $\beta$. 
We call the knitted surface $Q_n \times B^2$ the {\it trivial} knitted surface.

A knitted surface of degree $n$ is called a {\it braided surface} of degree $n$ if the transformations $\beta_0 \to \cdots \to \beta_m$ are restricted to the relations of the $n$-braid group $B_n$; that is, the relations do not contain $\tau_i$ $(i=1, \ldots, n-1)$.

A knitted surface $S$ of degree $n$ is called a \textit{2-dimensional knit} if $\partial S=Q_n \times \partial B^2$. 
A 2-dimensional knit is called a \textit{2-dimensional braid} if it is a braided surface. 

\end{definition}

\section{The closure of 2-dimensional knits}\label{sec3}

In this section, we show Theorem \ref{Theorem: Alexander theorem for surface-links}. 

\subsection{Closure of knits and 2-dimensional knits}
Let $B^m_1$ and $B_2^m$ be two copies of an $m$-disk $B^m$ $(m=1,2)$. Let $S$ be an $n$-knit in $D^2 \times B_1^1$ or a 2-dimensional knit in $D^2 \times B_1^2$, and let $S^m_0 = Q_n \times B^m_2 \subset D^2 \times B_2^m$ be the trivial knits/2-dimensional knits. Then, $\mathrm{cl}(S):= S \cup S_0^m$ is a closed 1-manifold or a  closed surface in $D^2 \times S^m$, where $S^m = B_1^m \cup_\partial B_2^m$. 

Let $F_0^m$ be a circle (respectively a 2-sphere) standardly embedded in $\mathbb{R}^{m+2}$ when $m=1$ (respectively $m=2$), and let $N_m$ be a regular neighborhood of $F_0^m$. 
We identify $N_m$ with $D^2 \times S^m$ $(m=1,2)$.
The \textit{closure} of $S$ is a link or a surface-link in $\mathbb{R}^{m+2}$ which is given by $\mathrm{cl}(S) \subset N_m \subset \mathbb{R}^{m+2}$.
 
As an analogue of Alexander's theorem for oriented links, it is known that: 

\begin{theorem}[\cite{Kamada94-2}]\label{Theorem: Alexander theorem for orientable surface-links}
    Every orientable surface-link is ambient isotopic to the closure of some 2-dimensional braid.
\end{theorem}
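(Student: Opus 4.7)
The plan is to derive Theorem~\ref{Theorem: Alexander theorem for surface-links} from Kamada's theorem for orientable surface-links (Theorem~\ref{Theorem: Alexander theorem for orientable surface-links}) by using the extra generators $\tau_i$ of the knit monoid $D_n$ to handle non-orientable features. Since every 2-dimensional braid is a 2-dimensional knit, the orientable case is immediate from Kamada's theorem. Moreover, the closure construction respects disjoint unions (closures of disjoint unions of knits are disjoint unions of closures), so it suffices to treat each connected component of $F$ separately. I may therefore assume $F$ is a connected non-orientable surface-link.

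The main step is to put $F$ into a hyperbolic normal form. By perturbing the fourth coordinate on $F$, I can arrange that $F$ meets some middle 3-space $\mathbb{R}^3 \times \{1/2\}$ in a trivial link $L_0$, and that on either side $F$ is generated by a finite sequence of saddle band attachments to link cross-sections, together with births and deaths of unknotted disks bounding trivial circles. Since $F$ is non-orientable, at least one of the bands must be non-orientable. Applying the classical Alexander theorem level by level, I further arrange that each link cross-section $L_t := F \cap (\mathbb{R}^3 \times \{t\})$ is a closed braid on a fixed number $n$ of strands inside a solid torus $D^2 \times S^1$, so that the motion picture of $F$ becomes a sequence of $n$-knit transformations in $D^2 \times I$ before taking closure.

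With this setup, each elementary transformation falls into one of the cases permitted by Definition~\ref{Definition: knitted surface in normal form}: ambient isotopies of braid cross-sections yield the braid-type monoid relations; an orientable twisted band between two adjacent strands yields $e \leftrightarrow \sigma_i^{\epsilon}$; a non-orientable untwisted band between two adjacent strands yields $e \leftrightarrow \tau_i$; and a birth or death of an unknotted component of the link yields $\tau_i \leftrightarrow \tau_i\tau_i$. Stringing these together gives a sequence of $n$-knits starting and ending at the trivial braid $e$, which presents a 2-dimensional knit $S$ whose closure is ambiently isotopic to $F$.

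The main obstacle is a Markov-type arrangement lemma: after placing the link cross-sections in closed-braid form, every saddle band must be isotoped to a local form sitting between two adjacent strands and matching one of the generators $\sigma_i^{\pm 1}$ or $\tau_i$ of $D_n$. For orientable bands this is precisely Kamada's braiding lemma from \cite{Kamada94-2}; the non-orientable case requires an analogous local surgery, possibly paying the cost of inserting extra stabilizing $\sigma_i^{\pm 1}$ and $\tau_i$ transformations to straighten the band without disturbing the braided structure elsewhere. Once this lemma is in place, the construction above assembles into a single 2-dimensional knit presenting $F$, completing the proof.
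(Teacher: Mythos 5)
There is a fundamental mismatch here: the statement you were asked to prove is Theorem~\ref{Theorem: Alexander theorem for orientable surface-links}, namely Kamada's result that every \emph{orientable} surface-link is the closure of a 2-dimensional \emph{braid}. This is a theorem the paper does not prove at all --- it is imported from \cite{Kamada94-2} as known input. Your proposal does not prove it either; on the contrary, your very first sentence \emph{assumes} it and uses it as the engine of an argument for a different statement, Theorem~\ref{Theorem: Alexander theorem for surface-links} (the knit version for arbitrary, possibly non-orientable, surface-links). As an answer to the assigned question this is circular: you cannot establish Kamada's theorem by invoking Kamada's theorem. A genuine proof of the assigned statement would have to carry out the braiding process itself --- putting the surface-link in a hyperbolic normal form and then deforming each cross-sectional link into a closed braid and each band into one of the $\sigma_i^{\pm 1}$ positions, which is the content of Kamada's original argument and is substantially harder than anything sketched here.

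Even read charitably as a proposal for Theorem~\ref{Theorem: Alexander theorem for surface-links}, your route differs from the paper's and has a real gap. The paper does not braid the motion picture of $F$ directly; it starts from Yasuda's plat closure theorem (Theorem~\ref{TheoremY}), takes an adequate braided surface $S$ with $\widetilde{S}\cong F$, and then converts the two cap systems of the plat closure into knit data by inserting the hook pairs $\tau=\tau_1\tau_3\cdots\tau_{2n-1}$ via band sets and replacing the caps with trivial disk systems (using \cite{KSS}). Your approach instead hinges on a ``Markov-type arrangement lemma'' asserting that every saddle band --- including the non-orientable ones --- can be isotoped into a local $\sigma_i^{\pm 1}$ or $\tau_i$ position between adjacent strands, and that births and deaths of trivial components can be realized as $\tau_i\leftrightarrow\tau_i\tau_i$. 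You explicitly leave the non-orientable case of this lemma unproved, but that is exactly where all the work lies: Kamada's braiding of bands is delicate even in the orientable setting, and you give no argument that an untwisted (orientation-reversing) band can always be normalized to a hook pair $\tau_i$ compatibly with the closed-braid structure of the neighboring cross-sections. Until that lemma is supplied, the construction does not assemble into a 2-dimensional knit.
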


\subsection{The plat closure of braids}
For a set $w=\{w^1, \ldots, w^l\}$, we denote by $\cup w$ the union $w^1 \cup \cdots \cup w^l$. 
A \textit{wicket} is a semi-circle properly embedded in $D^2 \times I$ which meets $D^2 \times \{0\}$ orthogonally.
A \textit{configuration of $n$ wickets} is a set of mutually disjoint $n$ wickets.
$\mathcal{W}_n$ denotes the space consisting of configurations of $n$ wickets.
Let $Q_{2n}$ be a set of $2n$ interior points $\{q_1 ,\dots, q_{2n}\}$ of $D^2$ such that $q_1, \dots, q_{2n}$ lie in $\{1/2\} \times I \subset I \times I=D^2$ in this order. Let $w_0$ denotes the configuration of $n$ wickets such that $\partial w_0 = Q_{2n} \times \{0\}$ and $q_{2i-1}$ and $q_{2i}$ are boundaries of the same component of $w_0$ for each $i \in \{1, \dots, n\}$. 
We denote by $w_0^*$ the mirror image of $w_0$ with respect to $D^2 \times \{1/2\}$; remark that $\cup w_0$ and $\cup w_0^*$ are disjoint.  
The union $ (\cup w_0) \cup (\cup w_0^*)$ represents the $2n$-knit $\tau := \tau_1 \tau_3 \dots \tau_{2n-1}$. 

For a $2n$-braid $\beta$, the \textit{plat closure} of $\beta$, denoted by $\widetilde{\beta}$, is a link in $D^2\times [-1,2] \subset \mathbb{R}^3$ obtained by taking the union of $w_0 \subset D^2\times [-1,0]$, $\beta \subset D^2\times [0,1]$, and $w_0^* \subset D^2\times [1,2]$. 
See Figure~\ref{20240707-1}.
The plat closure $\widetilde{\beta}$ can be regarded as the closure of the knit $\tau\beta$, that is, $\widetilde{\beta}=\mathrm{cl}(\tau \beta)$.

\begin{figure}[h]
    \centering
    \includegraphics[width=0.7\hsize]{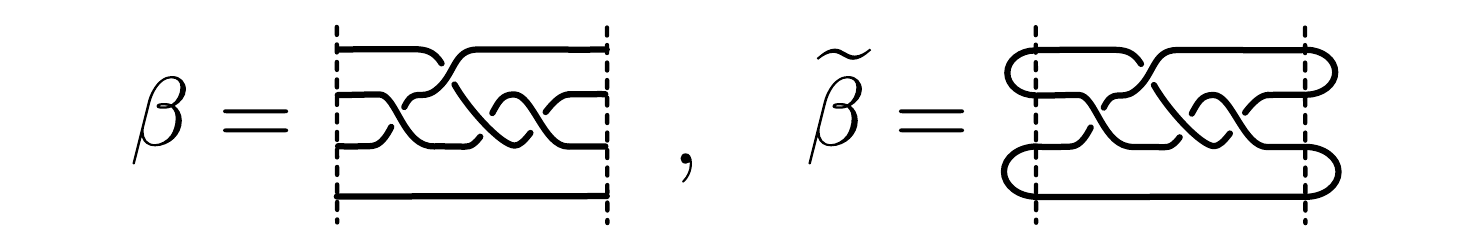}
    \caption{The plat closure of a braid. }
    \label{20240707-1}
\end{figure}

\subsection{The plat closure of braided surfaces}\label{sec:pcl-braided-surface}
We recall the definition of the plat closure of a braided surface \cite{Yasuda21}. To take the plat closure, a braided surface is required to be \lq\lq adequate''. 

For a loop $f: (I, \partial I) \to (\mathcal{W}_n, w_0)$, we define the $2n$-braid $\beta_f$ by $\beta_f = \bigcup_{t \in I} \partial f(t) \times \{t\}  \subset D^2\times \{0\} \times I$.
A braid $\beta$ is called \textit{adequate} if there exists a loop $f: (I, \partial I) \to (\mathcal{W}_n, w_0)$ such that $\beta = \beta_f$. 
The set of adequate $2n$-braids forms a group called the {\it Hilden subgroup} of the $2n$-braid group $B_{2n}$, denoted by $K_{2n}$, and $K_{2n}$ is generated by $\sigma_{2i-1}$, $\sigma_{2j}\sigma_{2j-1}\sigma_{2j+1}\sigma_{2j}$, and $\sigma_{2j}\sigma_{2j-1}\sigma^{-1}_{2j+1}\sigma^{-1}_{2j}$ for $i \in \{1, \dots, n\}$ and $j \in \{ 1, \dots, n-1\}$ \cite{Brendle-Hatcher2008} (see Figure \ref{20240707-2} for the case $n=2$).
 
\begin{figure}[h]
    \centering
    \includegraphics[width = 0.6\hsize]{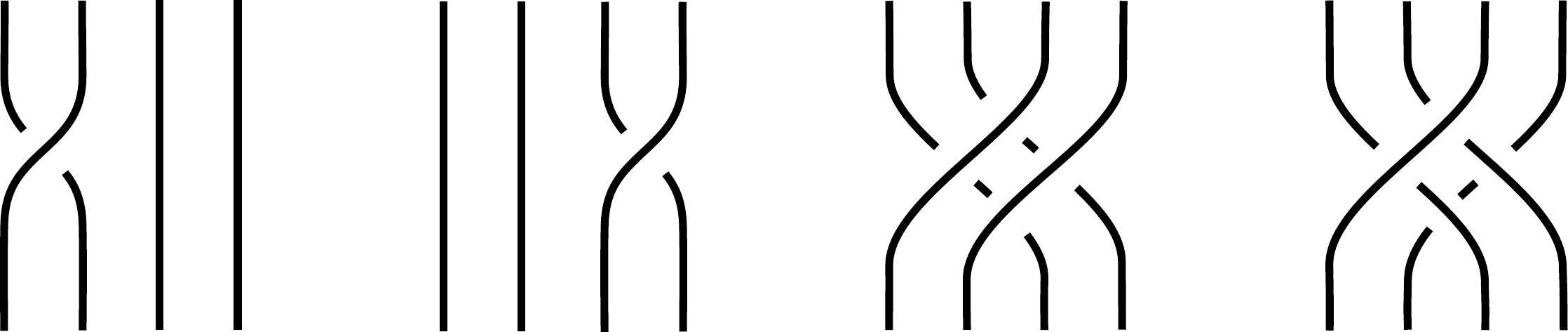}
    \caption{Adequate braids which generate the Hilden subgroup $K_4$: from the left, $\sigma_{1}$, $\sigma_3$, $\sigma_{2}\sigma_{1}\sigma_{3}\sigma_{2}$, and $\sigma_{2}\sigma_{1}\sigma^{-1}_{3}\sigma^{-1}_{2}$. This figure is  \cite[Figure 16]{NY}. 
  }
    \label{20240707-2}
\end{figure}

Let $\beta$ be an adequate $2n$-braid in $D^2 \times I$. 
There exists a unique loop $f: (I, \partial I) \to (\mathcal{W}_n, w_0)$ satisfying $\beta_f = \beta$. Let $S^1 = [0,1]/\sim$ with $0 \sim 1$. Let $N=I \times S^1$, an annulus. 
We define the surface $A_\beta$ in  $D^2 \times N$ by 
\begin{equation}\label{eq1218}
A_\beta = \bigcup_{t \in [0,1]} (f(t)\times \{t\}) \subset (D^2 \times I) \times S^1=D^2 \times N. 
\end{equation}

A braided surface $S$ is called \textit{adequate} if $\partial S$ is the closure of an adequate braid. We remark that an adequate braided surface is of degree $2n$ for some $n$, and every 2-dimensional braid of degree $2n$ is adequate. 

For an adequate braided surface $S$ in $D^2 \times B^2$, we denote by $\beta$ the adequate braid whose closure is $\partial S$. We take the surface $A_\beta$ given by (\ref{eq1218}). 
Recall that $B^2=[0,1] \times [0,1]$. We identify $N$ with $[-1,2] \times [-1,2] \backslash B^2$, and we identify $\partial B^2$ with $\{0\} \times S^1\subset I \times S^1=N$, and we embed $D^2 \times N \subset \mathbb{R}^4$ by $D^2 \subset  \mathbb{R}^2$ and $N \subset \mathbb{R}^2$.   
We remark that $\partial S = \partial A_\beta = S \cap A_\beta$.
Then the \textit{plat closure} of $S$ is a surface-link in $\mathbb{R}^4$ defined by the union of $S$ and $A_\beta$, denoted by $\widetilde{S}$.

\begin{theorem}[\cite{Yasuda21}]\label{TheoremY}
    Every surface-link is ambient isotopic to the plat closure of some adequate braided surface.
  \end{theorem}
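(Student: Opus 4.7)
The plan is to generalize Kamada's braiding theorem (Theorem \ref{Theorem: Alexander theorem for orientable surface-links}) from orientable surface-links to arbitrary ones, replacing the ordinary closure by the plat closure. Given a surface-link $F \subset \mathbb{R}^4$, I would first fix a standardly embedded $2$-sphere $F_0^2$ with tubular neighborhood $N_2 \cong D^2 \times S^2$ and use an ambient isotopy to place $F$ inside $N_2$. Then, using the decomposition $S^2 = B_1^2 \cup_\partial B_2^2$, I would arrange via a further ambient isotopy that $F \cap (D^2 \times B_1^2)$ is a braided surface $S$ and that $F \cap (D^2 \times B_2^2)$ is precisely the wicket cylinder $A_\beta$ associated to the boundary braid $\beta = \partial S$. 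By construction $F$ is then the plat closure $\widetilde{S}$.

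The non-orientability of $F$ is to be carried entirely by the wicket cylinders, since the branched cover $S$ over $B_1^2$ is necessarily orientable; a component of $A_\beta$ becomes a M\"obius band precisely when the wicket loop $f : (I, \partial I) \to (\mathcal{W}_n, w_0)$ swaps the two endpoints of the corresponding wicket. To produce the required position concretely, I would start from a Morse decomposition of $F$ with respect to the radial function on $B_2^2$, push the minima, maxima, and non-orientable handles of $F$ into $D^2 \times B_2^2$ as (possibly twisted) wicket trajectories, and keep the branch points and braid-type crossings in $D^2 \times B_1^2$. A Kamada-style braiding procedure applied in the interior of $D^2 \times B_1^2$ then yields a braided surface $S$ whose boundary realizes the prescribed wicket loop.

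The main obstacle is showing that the boundary braid $\partial S$ can always be made adequate, i.e.\ made to lie in the Hilden subgroup $K_{2n}$. Given the Brendle--Hatcher generating set $\sigma_{2i-1}$, $\sigma_{2j}\sigma_{2j-1}\sigma_{2j+1}\sigma_{2j}$, and $\sigma_{2j}\sigma_{2j-1}\sigma^{-1}_{2j+1}\sigma^{-1}_{2j}$, the problem reduces to stabilizing $S$ (by adding trivial pairs of wickets together with appropriate braid generators) until $\partial S$ admits a word in these generators. This requires a Hilden-subgroup analogue of Markov's theorem for braided surfaces: a careful analysis of how stabilizations and handle-slides on $S$ interact with the wicket cylinders along $\partial(D^2 \times B_1^2)$, ensuring that each modification extends to an ambient isotopy of $\widetilde{S}$ in $\mathbb{R}^4$ rather than changing its ambient isotopy class.
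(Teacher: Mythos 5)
First, a point of reference: this paper does not prove Theorem \ref{TheoremY} at all; it is quoted from \cite{Yasuda21}, so your proposal must be judged as an independent argument, and as such it has genuine gaps. The decisive step in your plan --- ``arrange via a further ambient isotopy that $F\cap(D^2\times B_1^2)$ is a braided surface $S$ and that $F\cap(D^2\times B_2^2)$ is precisely the wicket cylinder $A_\beta$'' --- is the theorem itself in disguise, and nothing in the sketch establishes it. Two separate things are needed and neither is supplied. (i) A splitting of an arbitrary (possibly non-orientable, knotted) surface-link into an orientable piece and a piece which is literally a union of wicket trajectories: since braided surfaces are branched covers of $B^2$ and hence orientable, all non-orientability must indeed be pushed into the wicket part, but ``push the minima, maxima, and non-orientable handles into wicket trajectories'' is a wish, not an argument; one needs a normal-form statement (in \cite{Yasuda21} this is done via a careful handle/normal-form analysis, building on trivial disk systems as in \cite{KSS}) guaranteeing such a position exists. (ii) A \emph{relative} braiding theorem: Kamada's braiding process (Theorem \ref{Theorem: Alexander theorem for orientable surface-links} and its version for compact surfaces in $D^2\times B^2$) must be applied keeping $F\cap(D^2\times I\times\partial B^2)$ fixed as the closed braid $\beta_f$ matching the already-chosen wicket loop $f$; an unconstrained braiding of the interior piece will in general move the boundary and destroy the gluing with $A_\beta$. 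You cite neither a relative statement nor a way to recover the matching afterwards.

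The adequacy issue is the same problem, not a separate cosmetic one, and your own text concedes it: ``this requires a Hilden-subgroup analogue of Markov's theorem for braided surfaces'' is precisely the unproven content. Knowing the Brendle--Hatcher generators of $K_{2n}$ \cite{Brendle-Hatcher2008} does not help until you have shown that the boundary braid of your braided piece can be chosen (or stabilized) to be $\beta_f$ for an honest loop $f:(I,\partial I)\to(\mathcal{W}_n,w_0)$, and that every stabilization or handle slide you perform extends to an ambient isotopy of the closed surface $\widetilde S$ in $\mathbb{R}^4$. Finally, a framing slip worth fixing: in this paper the plat closure is $S\cup A_\beta$ with $A_\beta$ lying over the \emph{annulus} $N=[-1,2]^2\setminus B^2$, so the whole configuration sits in $D^2\times\mathbb{R}^2\subset\mathbb{R}^4$; it is not the hemisphere decomposition $D^2\times S^2=(D^2\times B_1^2)\cup(D^2\times B_2^2)$, which is the setting of the ordinary closure of a 2-dimensional braid/knit. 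Demanding that the wicket part live over a disk $B_2^2$ would force the wicket loop to bound in $\mathcal{W}_n$, an extra condition you do not want. In summary, the outline names the right ingredients (wicket cylinders carrying non-orientability, Kamada-style braiding, Hilden subgroup adequacy) but leaves the load-bearing steps unproved.
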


\subsection{Proof of Theorem~\ref{Theorem: Alexander theorem for surface-links}}

Let $F$ be a surface-link. By Theorem \ref{TheoremY}, we take  an adequate braided surface $S$ of degree $2n$ such that the plat closure $\widetilde{S}$ is ambient isotopic to $F$.

\noindent
{\bf (Case 1)}  
First, we consider the case that $S$ is a 2-dimensional braid, that is, $\widetilde{S}$ has the motion picture as in Figure~\ref{20240629-1}. Put $\tau=\tau_1 \tau_3 \cdots \tau_{2n-1}$. 
\begin{figure}[h]
    \centering
    \includegraphics{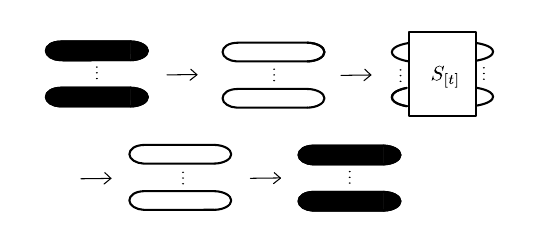}
    \caption{Motion picture of the plat closure of a 2-dimensional braid $S$. }
    \label{20240629-1}
\end{figure}
We take a surface-link $F_1$ given by 
\[
(F_1)_{[t]}=\begin{cases}
\mathcal{D} & (t=-1)\\
\mathrm{cl}(\tau) & (-1 < t<0)\\
\mathrm{cl}(\tau S_{[t]}) & (0 \leq t\leq 1)\\
\mathrm{cl}(\tau) & (1<t<2)\\
\mathcal{D} & (t=2),
\end{cases}
\]
where $\mathcal{D}$ is a disjoint union of $n$ copies of a disk whose boundaries are $\mathrm{cl}(\tau)$; see Figure \ref{20240629-2}. 
The surface $F_1$ is ambient isotopic to 
the plat closure $\widetilde{S}$. 
\begin{figure}[h]
    \centering
    \includegraphics{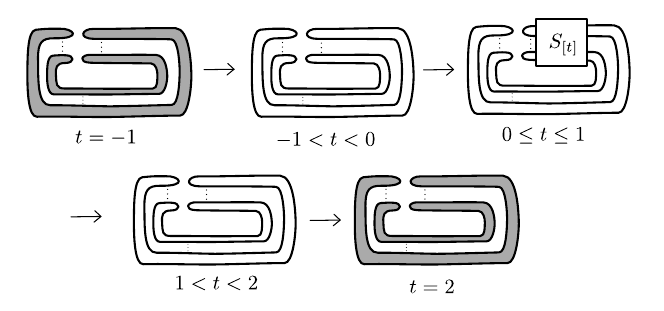}
    \caption{Motion picture of $F_1$. }
    \label{20240629-2}
\end{figure}

Let $\Delta^-$ be the surface properly embedded in $\mathbb{R}^3 \times (-\infty, 0]$ given by
\[
(\Delta^-)_{[t]}=\begin{cases}
\mathcal{D'} & (t=-2)\\
\mathrm{cl}(e) & (-2 < t<-1)\\
\mathrm{cl}(e) \cup (\cup\mathcal{B}) & (t=-1)\\
\mathrm{cl}(\tau) & (-1<t\leq 0),
\end{cases}
\]
where $\mathcal{D}'$ is a disjoint union of $2n$ disks whose boundaries are $\mathrm{cl}(e)$ and $\mathcal{B}$ is a band set attached to $e$ such that $h(e; \mathcal{B})=\tau$, and $\cup \mathcal{B}$ is the union of bands of $\mathcal{B}$; see Figure \ref{20240629-3}. The surface $\Delta^-$ consists of $n$ copies of a disk. 
We denote by $F^-_1$ the surface $F_1 \cap (\mathbb{R}^3 \times (-\infty, 0])$. 
Then both $\Delta^-$ and $F_1^-$ consist of $n$ copies of a disk with $\Delta^-\cap (\mathbb{R}^3 \times \{0\})=\partial \Delta^-=\partial F_1^-=F_1^- \cap (\mathbb{R}^3 \times \{0\}) \subset \mathbb{R}^3\times \{0\}$. These sets of disks are called {\it trivial disk systems}, and it is known \cite{KSS} that they are ambient isotopic in $\mathbb{R}^3 \times (-\infty, 0]$ rel $\mathbb{R}^3 \times \{0\}$.

\begin{figure}[h]
    \centering
    \includegraphics{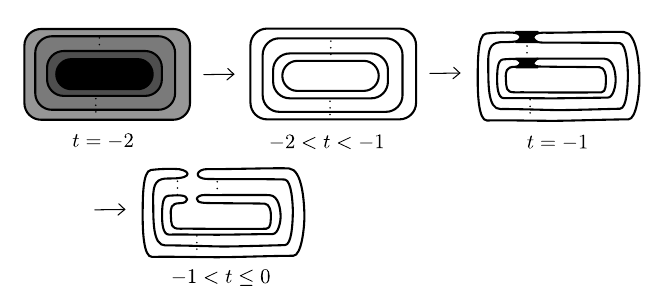}
    \caption{Motion picture of $\Delta^-$. }
    \label{20240629-3}
\end{figure}

Similarly, we take the surface $\Delta^+$ properly embedded in $\mathbb{R}^3 \times [1, \infty)$ given by the mirror image of $\Delta^-$ with respect to $\mathbb{R}^3 \times \{1/2\}$, and we denote by $F_1^+$ the surface $F_1 \cap (\mathbb{R}^3 \times [1, \infty))$. 
Then $\Delta^+$ and $F_1^+$ are ambient isotopic in $\mathbb{R}^3 \times [1, \infty)$ rel $\mathbb{R}^3 \times \{1\}$.
Let $F_2$ be the surface obtained from $F_1$ by replacing $F_1^+$ and $F_1^-$ with $\Delta^+$ and $\Delta^-$, respectively; that is, 
\[
F_2=\Delta^- \cup (F_1 \cap (\mathbb{R}^3 \times [0,1])) \cup \Delta^+. \]
By construction, $F_2$ is ambient isotopic to $F_1$.

Let $D^2 \times I$ be the cylinder associated with the knits in slices. 
Put $S_1=F_2 \cap (D^2 \times I \times [-3/2, 5/2])$.
Then $S_1$ is a 2-dimensional knit given by 
\[
(S_1)_{[t]}=\begin{cases}
e & (-3/2 \leq t \leq -1)\\
e \cup (\cup \mathcal{B}) & (t=-1)\\
\tau & (-1< t <0)\\
\tau S_{[t]} & (0 \leq t \leq 1)\\
\tau & (1 < t< 2)\\

\tau \cup (\cup \mathcal{B}) & (t=2)\\
e & (2 <t\leq 5/2). 
\end{cases}
\]
Further, we see that the closure of $S_1$ is $F_2$. Thus, the closure of the 2-dimensional knit $S_1$ is ambient isotopic to the given surface-link $F$. 

\noindent
{\bf (Case 2)} 
Next, we consider the case when $F$ is ambient isotopic to the plat closure $\tilde{S}$ of an adequate braided surface $S\subset D^2 \times B^2$ of degree $2n$.
 By considering a self-homeomorphism $f$ of $B^2$ and the induced self-homeomorphism of $D^2 \times B^2$ given by $\mathrm{id} \times f$, we assume that $\partial S \cap (D^2 \times I \times \{1\})$ has a presentation of an adequate braid  whose closure is $\partial S$. We denote the adequate braid $\partial S \cap (D^2 \times I \times \{1\})$ by $\gamma$. 
 By the same isotopy used to obtain $F_1$ from $\tilde{S}$ in (Case 1), we have a surface-link $F_1$. 
 Let $S'$ be a knitted surface in $D^2 \times B^2$ associated with the adequate braid $\gamma$ given in \cite[Section 6.4.2]{NY}. Then, the surface $S'$ satisfies  
 \[
S'_{[t]}=\begin{cases}
 \tau \gamma& (t=0)\\
\tau & (t=1)\\
\end{cases}. 
\]
Thus, let $S_1$ be the knitted surface given by 
 \[
 (S_1)_{[t]}=\begin{cases}
 \tau S_{[2t]} & (0 \leq t \leq 1/2)\\
 S'_{[2t-1]} & (1/2 \leq t \leq 1). 
 \end{cases}
 \]
Let $F_2$ be the 
surface given by 
\[
 (F_2)_{[t]}=\begin{cases}
 \mathcal{D} & (t=-1)\\
 \mathrm{cl}(\tau) & (-1<t<0)\\
 \mathrm{cl}((S_1)_{[t]}) & (0 \leq t\leq 1)\\
 \mathrm{cl}(\tau) & (1 <t<2))\\
 \mathcal{D} & (t=2),  
 \end{cases}
 \]
 where $\mathcal{D}$ is a disjoint union of $n$ copies of a disk whose boundaries are $\mathrm{cl}(\tau)$.

By construction of $S'$, $F_2 \cap (\mathbb{R}^3 \times [1/2, \infty))$ is a trivial disk system (see \cite[Proposition 6.6]{NY}) and $F_2$ is ambient isotopic to the surface-link $F_1$. 

Then, by the same process with (Case 1), we obtain a new surface-link $F_3$ from $F_2$ by replacing $F_2^+ = F_2 \cap (\mathbb{R}^3 \times [1, \infty))$ and $F_2^- = F_2 \cap (\mathbb{R}^3 \times (-\infty, 0])$ with $\Delta^+$ and $\Delta^-$, respectively. The surface-link $F_3$ is ambient isotopic to $F_2$. 
Then, $S_2:=F_3 \cap (D^2 \times I \times [-3/2, 5/2])$ is a 2-dimensional knit and the closure of $S_2$ is the surface-link $F_3$. 
Hence, the closure of the 2-dimensional knit $S_2$ is ambient isotopic to the given surface-link $F$.
\qed

\section{Chart description}\label{sec5}

A knitted surface of degree $n$ is presented by a finite graph on $B^2$ called a \lq\lq chart'' of degree $n$ \cite{NY}, which is a generalization of a chart for braided surfaces \cite{Kamada96, Kamada94, Kamada92}. 
Here, we give the definition of chart of degree 2. See \cite{NY} for the definition of degree $n$. 

\begin{definition}\label{def5-5}
Let $\Gamma$ be a finite graph in a 2-disk $B^2$.
Then, $\Gamma$ is a {\it knit chart} of degree $2$, or simply a {\it 2-chart}, if it satisfies the following conditions.

\begin{enumerate}
\item
The intersection $\Gamma \cap \partial B^2$ consists of a finite number of endpoints of edges of $\Gamma$ meeting $\partial B^2$ orthogonally. Though they are vertices of degree one, we call elements of $\Gamma \cap \partial B^2$ {\it boundary points}, and we call only a vertex in $\mathrm{Int}(B^2)$ {\it a vertex of $\Gamma$}.
 
\item
Each edge is equipped with the label $1$ and moreover, each edge is either oriented or unoriented. We call an oriented/unoriented edge a {\it $\sigma$-/$\tau$-edge}, respectively. 

\item
Each vertex is of degree 1 or 3 as depicted in Figure  \ref{BMWchart}.

A vertex of degree 1 connected with a $\sigma$-edge  (respectively $\tau$-edge)  is depicted by a small black (respectively red) disk, called a {\it black $\sigma$-vertex} (respectively {\it black $\tau$-vertex}). A vertex of degree $3$ is called a {\it mixed trivalent vertex} (respectively a {\it trivalent $\tau$-vertex}) if it is connected with a $\sigma$-edge and a pair of $\tau$-edges (respectively three $\tau$-edges). 

\end{enumerate}

\end{definition}

\begin{figure}[ht]
\includegraphics*[height=4cm]{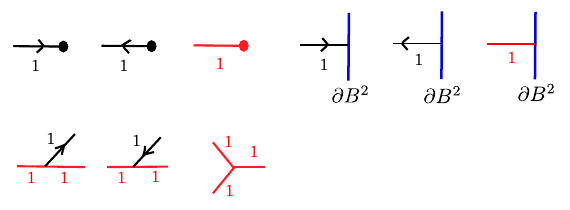}
\caption{Vertices of a 2-chart.}%
\label{BMWchart}
\end{figure}

A knitted surface has a chart description. 
From a knitted surface of degree $n$, we obtain a chart of degree $n$, and 
from a given chart $\Gamma$, 
we can construct a knitted surface whose chart is $\Gamma$ (\cite{NY}). 

In the rest of this paper, we focus on knitted surfaces of degree $2$ and their $2$-charts. Thus we only
 review the knitted surface $S(\Gamma)$ of degree 2 constructed from a $2$-chart $\Gamma$ as follows. 
We decompose $B^2$ into a union of a finite number of 2-disks such that for each disk $E$, by modification, $\Gamma \cap E$ is either an empty graph (called an {\it empty chart}) or one of the $2$-charts as in Figures \ref{BMWchart} and \ref{chart-edge2}, where the dotted square in Figure \ref{chart-edge2} denotes $\partial E$. 
We identify $E$ with $I_3 \times I_4$, where the direction of $I_3$ is the vertical direction from up to down and the direction of $I_4$ is the horizontal direction from left to right in the figures. 
When $\Gamma \cap E$ is an empty chart, we define $S(\Gamma \cap E)$ by the trivial knitted surface $Q_2 \times E$. 
When $\Gamma \cap E$ is one of the $2$-charts as in Figure  \ref{chart-edge2}, we define $S(\Gamma \cap E)$ by the knitted surface $\sigma_1 \times I_4$ for Figure \ref{chart-edge2}(1), $\sigma_1^{-1} \times I_4$ for Figure \ref{chart-edge2}(2), and $\tau_1 \times I_4$ for Figure \ref{chart-edge2}(3). 
When $\Gamma \cap E$ is one of the $2$-charts as in Figure  \ref{BMWchart}, we define $S(\Gamma \cap E)$ by the knitted surface presented by $s \to e$ for the $2$-chart containing one black $\sigma$/$\tau$-vertex for appropriate $s \in \{\sigma_1, \sigma_1^{-1}, \tau_1\}$, and the knitted surface presented by $\tau_1 \to \sigma_1 \tau_1$, $\tau_1 \to \sigma_1^{-1}\tau_1$, or $\tau_1\tau_1 \to \tau_1$ for the $2$-chart containing a trivalent vertex. 
We construct the knitted surface presented by $\Gamma$ as the union of the knitted surfaces presented by $\Gamma \cap E$. 

\begin{figure}[ht]
\includegraphics*[height=3cm]{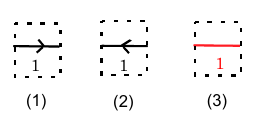}
\caption{The $2$-chart $\Gamma \cap E$ consisting of one edge.}%
\label{chart-edge2}
\end{figure}

\subsection{Chart moves}\label{sec7-1}
In this section, we give the definition of chart moves for $2$-charts, that are local moves of charts. This includes C-moves of charts of braided surfaces \cite{Kamada02, Kamada96, Kamada92}.

\begin{definition}\label{def6-2}
Let $\Gamma$ and $\Gamma'$ be $2$-charts in $B^2$. 
Then, a {\it knit chart move}, or a {\it C-move}, is a local move in a 2-disk $E \subset B^2$ such that $\Gamma$ (respectively $\Gamma'$) is changed to $\Gamma'$ (respectively $\Gamma$), satisfying the following.

\begin{enumerate}
\item
The boundary $\partial E$ does not contain any vertices of 
$\Gamma$ and $\Gamma'$ except for boundary points, and $\Gamma \cap \partial E$ (respectively $\Gamma' \cap \partial E$) consists of transverse intersection points of edges of $\Gamma$ (respectively $\Gamma'$) and $\partial E$.

\item
The charts are identical in the complement of $E$:
$\Gamma \cap (B^2\backslash E)=\Gamma' \cap (B^2\backslash E)$.

\item
The charts in $E$, $\Gamma \cap E$ and $\Gamma' \cap E$, 
are as in Figure \ref{20241211-1a} and the orientation-reversed figures of Figure \ref{20241211-1a}. 
 
\end{enumerate}

We say that $\Gamma$ and $\Gamma'$ are {\it C-move equivalent} if they are related by a finite sequence of C-moves and ambient isotopies of $B^2$. 
 
\end{definition}

\begin{figure}[ht]
\includegraphics*[height=8cm]{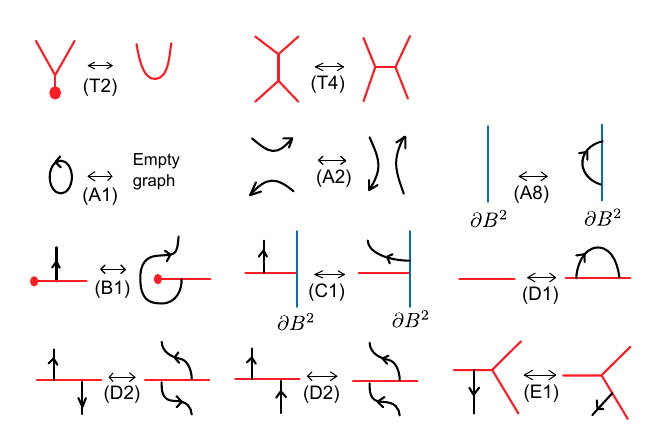}
\caption{C-moves for 2-charts, where we omit labels, which are all $1$. C-moves include the orientation-reversed version. }
\label{20241211-1a}
\end{figure}

Let $\Gamma$ and $\Gamma'$ be $2$-charts and let $S$ and $S'$ be the knitted surfaces presented by $\Gamma$ and $\Gamma'$, respectively. 
Theorem 7.5 in \cite{NY} implies that $S$ and $S'$ are ambient isotopic if $\Gamma$ and $\Gamma'$ are $C$-move equivalent. Thus, the plat closures of $S$ and $S'$ are ambient isotopic if $\Gamma$ and $\Gamma'$ are $C$-move equivalent.

\section{Plat closure of knitted surfaces of degree $2$}\label{sec8}

In this section, we consider knitted surfaces of degree $2$. 
We give a normal form of a $2$-chart (Definition  \ref{normal-form-chart} and Proposition \ref{thm6-3}). 
We show that any trivial surface-link is ambient isotopic to the plat closure of knitted surfaces of degree 2 (Corollary \ref{cor5-5}).
We show that the plat closure of any knitted surface of degree 2 is a trivial surface-link (Theorem \ref{thm6-7}), and we show Theorem \ref{thm20241213} and Corollary \ref{thm20241217}. 
\\

\subsection{Knitted surfaces of degree $2$}

 A $\sigma$-/$\tau$-edge of a chart is called a {\it free $\sigma$-/$\tau$-edge} if its endpoints are black $\sigma$-/$\tau$-vertices. A $\sigma$-edge of a chart
 is called a {\it half $\sigma$-edge} if its endpoints consists of a black $\sigma$-vertex and either a boundary point or a mixed
 trivalent vertex. A $\tau$-edge of a chart is called a {\it half $\tau$-edge} if its endpoints consists of a black $\tau$-vertex and either a boundary point or a trivalent $\tau$-vertex. 
  
For a chart $\Gamma$ of degree 2, the {\it $\tau$-chart} of $\Gamma$, denoted by $\Gamma_\tau$, is defined as the subgraph of $\Gamma$ consisting of $\tau$-edges and the connecting vertices. The $\tau$-chart $\Gamma_\tau$ may contain $2$-valent vertices which are obtained from mixed
 trivalent vertices. Then, we merge the two edges connected by each 2-valent vertex after removing the $2$-valent
 vertex. Thus, $\Gamma_\tau$ consists of intervals, circles, and graphs consisting of $1$-valent vertices and $3$-valent
 vertices. We call a connected component of $\Gamma_\tau$ {\it a $\tau$-component}. When $\Gamma_\tau$ is contained in $B^2$, a {\it region} of $\Gamma_\tau$ is the closure of a connected component of $B^2\backslash \Gamma_\tau$.

\begin{definition}\label{normal-form-chart}
Let $\Gamma$ be a $2$-chart, let $R_1, \ldots,R_m$ be  regions of the $\tau$-chart $\Gamma_\tau$, and put $\Gamma^j := \Gamma \cap R_j$, the subchart of $\Gamma$ $(j=1, \ldots,m)$. Then, we say that $\Gamma$ is in a {\it normal form} if for each $j = 1,\ldots,m$, there exists a $\tau$-component $G_j$ of $\Gamma^j$ such that every $\sigma$-edge
 of $\Gamma^j$ is either a free $\sigma$-edge or connected with $G_j$, and all of the half $\sigma$-edges connected with $G_j$ have orientations either toward or from $G_j$. We call $G_j$ the {\it initial $\tau$-component} of $\Gamma^j$. 
 
\end{definition}
See Figure \ref{20240330-1} for an example. 

\begin{figure}[ht]
\includegraphics*[height=5cm]{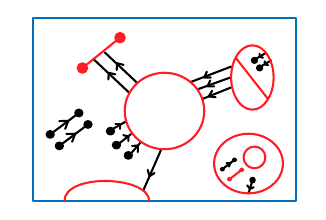}
\caption{An example of a $2$-chart in a normal form, where we omit labels, which are all $1$. The $\tau$-edge in the form of the circle in the center is the initial $\tau$-component associated with the region whose closure contains the larger part of $\partial B^2$.}
\label{20240330-1}
\end{figure}

\begin{proposition}\label{thm6-3}
Any 2-chart is C-move equivalent to a $2$-chart in a normal form given in Definition \ref{normal-form-chart}.
\end{proposition}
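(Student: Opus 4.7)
The plan is to reduce $\Gamma$ region by region. Since the $\tau$-chart $\Gamma_\tau$ partitions $B^2$ into regions $R_1, \ldots, R_m$, and every C-move supported in the interior of a single region $R_j$ (or supported along a $\tau$-edge of $\partial R_j$) preserves this decomposition, it suffices to show that within each $R_j$ the subchart $\Gamma^j := \Gamma \cap R_j$ can be C-moved into the desired shape: a chosen $\tau$-component $G_j$ of $\Gamma^j$ receives an endpoint of every non-free $\sigma$-edge of $\Gamma^j$, and the resulting half $\sigma$-edges on $G_j$ have coherent orientations. If $\Gamma^j$ contains no $\tau$-component at all, the claim is vacuous: the subchart consists only of $\sigma$-edges and free $\tau$-edges, and the $\sigma$-part can be reduced by the standard C-moves for charts of 2-braided surfaces as in \cite{Kamada96, Kamada92}.

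The heart of the argument is a \textbf{transfer step}. Given a mixed trivalent vertex $v$ lying on a $\tau$-component $H$ of $\Gamma^j$ with $H \neq G_j$, I would produce a sequence of C-moves that replaces $v$ by a mixed trivalent vertex on $G_j$ of the appropriate orientation, at the cost of possibly creating some free $\sigma$-edges. This is done by selecting a simple arc $\alpha \subset R_j$ from $v$ to a point of $G_j$ that avoids the other vertices of $\Gamma^j$ and meets its $\sigma$-edges transversally. Along $\alpha$, one applies the C-move encoding $\sigma_1^\epsilon \tau_1 = \tau_1$ to detach the $\sigma$-edge at $v$ from $H$, slides it across $\alpha$ using the local C-moves from Figure~\ref{20241211-1a} at each transverse crossing with another $\sigma$-edge, and then applies the inverse absorption C-move at the far end of $\alpha$ to attach it to $G_j$. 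Iterating over all mixed trivalent vertices of $\Gamma^j$ puts every non-free $\sigma$-edge's attachment on $G_j$.

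To finish, I would make the orientations coherent. If two half $\sigma$-edges at $G_j$ have opposite orientations, one can reverse one of them by inserting a canceling pair $\sigma_1 \sigma_1^{-1}$ near $G_j$ via the relevant C-move, absorbing one end into $G_j$ with the desired orientation through $\sigma_1^\epsilon \tau_1 = \tau_1$, and leaving the other end as a free $\sigma$-edge (which is allowed by the normal form). Iterating this adjustment establishes coherent orientations along $G_j$ and completes the reduction inside $R_j$; carrying out the procedure in every region in turn yields a normal-form chart C-move equivalent to $\Gamma$.

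The main obstacle will be verifying the transfer step rigorously: along the arc $\alpha$ one must handle every transverse intersection with $\Gamma^j$ by an available C-move of Figure~\ref{20241211-1a}, ensuring that no new mixed trivalent vertex is accidentally created on a $\tau$-component other than $G_j$ (or the $H$ from which one is currently detaching). This is a careful diagrammatic argument, in the spirit of the normal-form reductions for charts of 2-dimensional braids in \cite{Kamada96, Kamada92}, but enriched by the bookkeeping of the $\tau$-components of a knit chart.
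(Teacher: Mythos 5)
Your proposal follows essentially the same route as the paper's proof: work region by region, fix an initial $\tau$-component $G_j$, use a simple path together with the absorption relation $\sigma_1^{\pm 1}\tau_1=\tau_1$ and the sliding C-moves to transfer the attachments of all non-free $\sigma$-edges onto $G_j$, and then trade wrongly oriented half $\sigma$-edges for free $\sigma$-edges to achieve coherent orientations. The paper's version merely adds an explicit preliminary classification of $\sigma$-edges (circle components, free and half $\sigma$-edges, edges meeting $\partial B^2$ or other $\tau$-components) before applying the same moves, and it defers the diagrammatic details exactly as you do.
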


\begin{proof}
Let $\Gamma$ be a $2$-chart. The labels of edges are all one. We denote by the same notation $\Gamma$ the charts obtained from $\Gamma$ by C-moves.
 From now on, we consider each subchart $\Gamma^j=\Gamma \cap R_j$ of $\Gamma$ as given in Definition \ref{normal-form-chart}. We fix a $\tau$-component $G_j$ of $\Gamma^j$. 
 Let $\mathbf{e}$ be a $\sigma$-edge of $\Gamma^j$. Take a simple path $l$ from a point $x_0$ of $\mathrm{Int}(\mathbf{e})$ to an interior point $y_0$ of a $\tau$-edge of $G_j$ such that the intersection $\Gamma^j \cap \mathrm{Int} (l)$ consists of a finite number of transverse intersection points of interiors of $\sigma$-edges and $\mathrm{Int} (l)$, which will be denoted by $x_1, \ldots, x_s$ from $x_0$ to $y_0$. Consider the $\sigma$-edge $\mathbf{e}_s$ containing $x_s$. Applying a (D1)-move and an (A2)-move as in Figure \ref{20241222}, we deform $\mathbf{e}_s$ into a $\sigma$-edge or a pair of $\sigma$-edges connecting with $G_j$. 
 Repeating this process to the the $\sigma$-edges containing $x_k$ $(k=s, s-1, \ldots, 0)$ in this order, we deform $\mathbf{e}$ into a $\sigma$-edge or a pair of $\sigma$-edges connecting with $G_j$. Applying this deformation to $\sigma$-edges of $\Gamma^j$ other than free $\sigma$-edges, we deform $\Gamma^j$ to the form such that all of the $\sigma$-edges other than free $\sigma$-edges are connected with $G_j$. 
 Further, by (A2)-moves as in Figure \ref{20251005-2}, we deform pairs of half $\sigma$-edges with opposite orientations into free $\sigma$-edges; thus we deform $\Gamma^j$ so that all of the $\sigma$-edges other than free $\sigma$-edges are connected with $G_j$, and all of the half $\sigma$-edges are oriented toward/from $G_j$.  
 Thus we arrange $\Gamma$ into a normal form.  
\end{proof}

\begin{figure}[ht]
\includegraphics*[height=2.5cm]{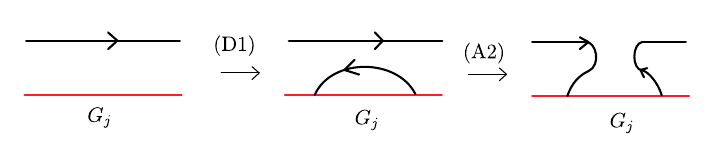}
\caption{Deforming a $\sigma$-edge in $\Gamma^j$ to a $\sigma$-edge connecting with the initial $\tau$-component $G_j$. We omit the labels of the $2$-charts. }
\label{20241222}
\end{figure}

\begin{figure}[ht]
\includegraphics*[height=2.4cm]{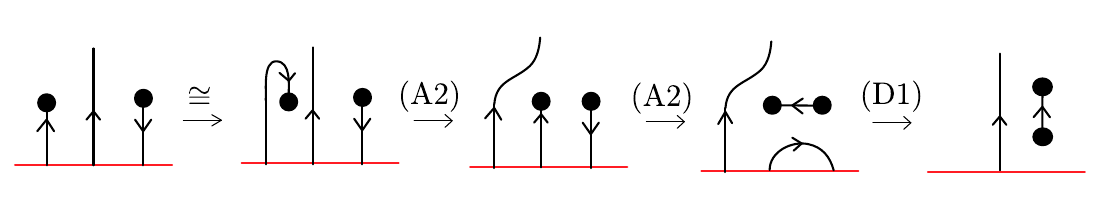}
\caption{Deforming a pair of half $\sigma$-edges with opposite orientations into a free $\sigma$-edge. We omit the labels of the $2$-charts. }
\label{20251005-2}
\end{figure}

\begin{remark}
We can deform a 2-chart $\Gamma$ to a more regulated form, also denoted by  $\Gamma$, satisfying that (1) every black  $\tau$-vertex is an endpoint of a free $\tau$-edge, (2) free $\sigma$-/$\tau$-edges are in a $2$-disk which contains no other parts of $\Gamma$, (3) all of the $\sigma$-edges connecting a $\tau$-component and the initial $\tau$-component $G_j$ have orientations either toward or from $G_j$, and (4) there are no $\sigma$-edges both of whose endpoints are connected with the initial $\tau$-component $G_j$. 
\end{remark}

\subsection{The plat closure of knitted surfaces}\label{sec4-3-pcl}
Let $S$ be a knitted surface of degree $2$. We define the {\it plat closure} of $S$ in such a way as follows. If $S$ is a braided
 surface, then $S$ is adequate; thus we define the plat closure  $\tilde{S}$ of $S$ by that of a braided surface, given by $\tilde{S}=S \cup A_\beta \subset \mathbb{R}^4$, where $A_\beta$ is the surface of (\ref{eq1218}) in Section \ref{sec:pcl-braided-surface}. If $S$ is a knitted surface of degree 2 such that $\partial S$ is the closure of a $2$-braid, then we define the plat
 closure of $S$ by the same way, given by $\tilde{S}=S \cup A_\beta$. We consider the case that $\partial S$ is the closure of a
 $2$-knit having hook pairs. Then, $\partial S$ is an $m$-component trivial link, where $m$ is the number of hook pairs in $\partial S$. We define the plat closure of $S$ by the closed surface obtained from $S$ by attaching $m$ copies of a $2$-disk trivially along $\partial S$.

 For knitted surfaces of degree $n$, we do not give the exact definition in terms of wickets in this paper, but we briefly give the idea of the plat closure, as follows. 
We call a $2n$-knit {\it adequate} if it is an element of the submonoid of the $2n$-knit monoid $D_{2n}$ generated by adequate $2n$-braids and $\tau_{2k-1}$ $(k=1, \ldots, n)$. 
We call a knitted surface $S$ of degree $2n$ {\it adequate} if $\partial S$ is the closure of an adequate $2n$-knit. 
For an adequate knitted surface $S$ of degree $2n$, we construct the {\it plat closure} of $S$ as follows. 
Let $\beta$ be the adequate $2n$-knit whose closure is $\partial S$.  
We denote by $\beta'$ the $2n$-braid obtained from the adequate $2n$-knit $\beta$ by changing each letter in the form of $\tau_i$ for some $i$ to $\sigma_i$. We denote by $s_1, \ldots,  s_m$ the letters of $\beta'$ coming from the letters of $\beta$ in the form of $\tau_{2k-1}$ $(k=1,\ldots, n)$. The $2n$-braid $\beta'$ is adequate. Let $A_{\beta'}$ be the surface associated with $\beta'$ given in (\ref{eq1218}) in Section \ref{sec:pcl-braided-surface}. From now on, we ignore the strings except for the $i$th and $(i+1)$th, when $s_j$ in $\beta'$ corresponds to $\tau_i$ in $\beta$. The part of $A_{\beta'}$ corresponding to $s_j$, that is $A_{\beta'} \cap (D^2 \times I \times [t_j, t_j']):=B_j$ for some $0\leq t_j<t_j'\leq 1$ with $\partial A_{\beta'} \cap (D^2 \times \{0\} \times [t_j, t_j'])=s_j$, is a band with one twist. 
Then, since $(A_{\beta'} \cap (D^2 \times I \times \partial [t_j, t_j'])) \cup (\partial S \cap (D^2 \times \{0\} \times [t_j, t_j'])):=L_j$ is a two-component trivial link, we take a disjoint union of two disks $D_j$ in $S^3=\partial (D^2 \times I \times [t_j, t_j'])$ whose boundary is $L_j$. Then, let $A'$ be the surface obtained from deleting the twisted band $B_j$ from $A_{\beta'}$ $(j=1, \ldots, m)$ and pasting the disks obtained from $D_j$ by pushing them into $D^2 \times I \times [t_j, t_j']$ $(j=1, \ldots, m)$. We see that the surface $A'$ is contained in $D^2 \times N$ such that $\partial A'=\partial S=S \cap A'$. 
We define the {\it plat closure} of $S$ by the surface-link $S \cup A'$ in $\mathbb{R}^4$. 

\begin{remark}\label{remark1223}
The plat closure of a knitted surface $S$ of degree $2$ is a special case of the closure of a $2$-dimensional knit of degree $2$ by the following meaning. Let $\Gamma$ be a $2$-chart of $S$, $y \in \partial B^2$ a point disjoint with $\Gamma$, and let $N$ be a small neighborhood of
 $y$ disjoint from $\Gamma$. Let $\tilde{B}^2$ be a $2$-disk such that $B^2$ is contained in the interior of $\tilde{B}^2$. Let ${\Gamma}'$ be the $2$-chart in $\tilde{B}^2$ obtained from $\Gamma$ by adding to $\Gamma$ a union of $\tau$-edges in the form $\partial B^2\backslash  (N\cup \Gamma)$.  Since ${\Gamma}'$ does not have boundary points,
 it represents a $2$-dimensional knit of degree $2$, denoted by $S'$. By applying the argument in the proof of Theorem \ref{Theorem: Alexander theorem for surface-links},
the plat closure of S and the closure of $S'$ is ambient isotopic as surface-links. In
 particular, when $\Gamma$ has no boundary points, then ${\Gamma}'$ is obtained from $\Gamma$ by an addition of a single free $\tau$-edge. 
\end{remark}

 We can calculate the Euler characteristic of a surface-link presented by the plat closure of a knitted surface of degree $2$ by using the information derived from 2-charts. The proof is left to the reader; see \cite[Exercise 9.15]{Kamada02}. 
For simplicity, we consider a surface-link presented by the closure of a 2-dimensional knit of degree 2, which is presented by 
a $2$-chart in a 2-sphere $S^2$.

\begin{proposition}
Let $F$ be a surface-link presented by the closure of a 2-dimensional knit of 
degree $2$, and let $\Gamma \subset S^2$ be its $2$-chart. Let $R_1, \ldots,R_m$ be  regions of the $\tau$-chart $\Gamma_\tau$, and put $\Gamma^j := \Gamma \cap R_j$, the subchart of $\Gamma$ $(j=1, \ldots,m)$. Then $F$ is a split union of $F_1, \ldots, F_m$, where $F_j$ is the plat closure of a knitted surface $S(\Gamma^j)$, $j=1, \ldots, m$, and the Euler characteristic $\chi(F_j)$ is calculated by 
\[
\chi(F_j)=2\chi(R_j')-b(\Gamma^j),
\]
where $b(\Gamma^j)$ is the number of black $\sigma$-vertices 
of $\Gamma^j$, and $\chi(R_j')$ is
the Euler characteristic of 
 $\mathrm{Cl}(R_j \cap (B^2 \backslash N(\Gamma_\tau)))$, where $N(\Gamma_\tau)$ is a regular neighborhood of $\Gamma_\tau$ $(j=1, \ldots, m)$. 
\end{proposition}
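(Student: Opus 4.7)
The plan is to decompose $F$ along the $\tau$-chart $\Gamma_\tau$, identify each piece with the plat closure of the corresponding subchart, and then obtain the Euler characteristic via a Riemann-Hurwitz computation together with a local cancellation near $\Gamma_\tau$.

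First I would establish the splitting $F = F_1 \sqcup \cdots \sqcup F_m$ and the identification $F_j = \widetilde{S(\Gamma^j)}$. The graph $\Gamma_\tau$ cuts $S^2$ into the regions $R_1,\dots,R_m$. Over a tubular neighborhood of $\Gamma_\tau$, the surface $F$ consists entirely of the local band-and-cap pieces associated with the $\tau$-edges and with the $\tau$-type vertices (black $\tau$-vertices and trivalent $\tau$-vertices), and the plat-closure construction (Section~\ref{sec4-3-pcl} together with Remark~\ref{remark1223}) caps each hook-pair closed curve by a disk. These caps split $F$ region by region, and the piece lying over $R_j$ is by definition $F_j$. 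Viewing $R_j$ as a $2$-disk with $\partial R_j \subset \Gamma_\tau$, the chart $\Gamma^j = \Gamma \cap R_j$ has boundary points on $\partial R_j$ corresponding to the hook-pair ends of the boundary $\tau$-edges; hence $S(\Gamma^j)$ is a knitted surface whose boundary consists of these hook-pair circles, and its plat closure $\widetilde{S(\Gamma^j)}$ caps them by disks which match the splitting caps above, yielding $F_j = \widetilde{S(\Gamma^j)}$.

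For the Euler characteristic, I would write $F_j = F_j|_{R_j'} \cup F_j^{\tau}$, where $F_j^{\tau}$ denotes the part of $F_j$ lying over $N(\Gamma_\tau) \cap R_j$ together with all the plat-closure capping disks. Over $R_j'$, the projection $F_j|_{R_j'} \to R_j'$ is a simple $2$-fold branched cover whose branch points are exactly the lifts of the $b(\Gamma^j)$ black $\sigma$-vertices of $\Gamma^j$ (no branching arises from $\tau$-edges, which have been removed). Riemann-Hurwitz then yields
\[
\chi(F_j|_{R_j'}) = 2\chi(R_j') - b(\Gamma^j).
\]
Since $F_j|_{R_j'}$ and $F_j^{\tau}$ meet only along a disjoint union of circles, additivity of the Euler characteristic gives $\chi(F_j) = \chi(F_j|_{R_j'}) + \chi(F_j^{\tau})$, and the proposition reduces to the claim $\chi(F_j^{\tau}) = 0$.

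The main obstacle will be the verification that $\chi(F_j^{\tau}) = 0$ through a local analysis at each $\tau$-edge (a band, contributing $0$), each black $\tau$-vertex (a saddle band), each trivalent $\tau$-vertex, and each hook-pair closed curve in $\partial S(\Gamma^j)$ that is capped by the plat closure. A cleaner alternative is to equip $R_j$ with a CW structure having $\Gamma \cap R_j$ as a subcomplex, lift it to a CW structure on $F_j$ with the plat-closure disks as additional $2$-cells, and count cells directly, following the analogous computation for braided surfaces in \cite[Exercise~9.15]{Kamada02}.
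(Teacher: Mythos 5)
The paper does not actually supply a proof of this proposition (it is explicitly left to the reader, with a pointer to \cite[Exercise 9.15]{Kamada02}), so there is no argument of the authors to compare yours against; your route --- split $F$ along $\Gamma_\tau$, apply Riemann--Hurwitz to the $2$-fold simple branched cover over $R_j'$ with the black $\sigma$-vertices as branch points, and reduce to showing the part over $N(\Gamma_\tau)$ contributes $0$ --- is the natural one and is consistent with that pointer. Two points, however, need repair or completion. First, the splitting mechanism is misattributed: $F\subset D^2\times S^2$ is already closed, and no plat-closure caps are being added to it. The reason $F$ splits is that over an interior point of a $\tau$-edge the slice is a hook pair, so locally $F$ consists of two disjoint sheets, each folding back over exactly one of the two regions adjacent to that edge; consequently $F\cap (D^2\times R_j)$ is a closed surface and the pieces over distinct regions are disjoint. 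It is this fold structure that simultaneously yields the split union and the identification of the piece over $R_j$ with the plat closure of $S(\Gamma^j)$: the fold sheets along $\partial R_j$ (and over interior $\tau$-edges, black $\tau$-vertices and trivalent $\tau$-vertices) play the role of the capping disks in the plat-closure model of Section \ref{sec4-3-pcl} and Remark \ref{remark1223}. Your argument as written runs this implication backwards.

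Second, the equality $\chi(F_j^\tau)=0$, which you correctly isolate, is precisely the content of the proposition beyond the classical braided-surface computation, and it is only announced, not proved. Moreover, the parenthetical per-piece accounting you indicate does not work piecewise: a saddle band at a black $\tau$-vertex contributes $-1$, a fold rectangle over a $\tau$-edge segment contributes $+1$, a disk at a trivalent $\tau$-vertex contributes $+1$, and these only cancel after the gluings along arcs are taken into account (already for a single free $\tau$-edge the count is $2-2=0$, not a sum of zeros). So the deferred step genuinely has to be carried out, e.g.\ by your ``cleaner alternative'': give $N(\Gamma_\tau)\cap R_j$ a cell structure with $\Gamma_\tau$ as a subcomplex and count the cells of the lifted structure, or equivalently compute $\chi(F_j)$ by additivity of the Euler characteristic over the strata of $R_j$, using that the fiber of $F_j\to R_j$ has Euler characteristic $2$ over the open $2$-strata and over interior points of $\sigma$-edges, $1$ over a black $\sigma$-vertex, $1$ over a $\tau$-edge bounding $R_j$ on one side, and $2$ over a $\tau$-edge with $R_j$ on both sides. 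With the fold-sheet description in place and one of these counts completed, your outline does give the stated formula; checking it against Theorem \ref{prop6-5} (free $\tau$-edge $\mapsto$ torus summand, half $\sigma$-edge $\mapsto$ projective plane) is a useful sanity test.
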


\subsection{Plat closure of knitted surfaces of degree $2$ presenting trivial surface-links}

A {\it standard 2-sphere} (respectively a {\it standard torus}) in $\mathbb{R}^4$ is the boundary of a 3-ball (respectively an unknotted solid torus) in $\mathbb{R}^3 \times \{0\}$, and a standard positive projective plane (respectively a {\it standard negative projective plane}) is the surface given by Figure \ref{20240330-2}(1) (respectively (2)). 
These surfaces are called {\it standard surfaces}. 
A surface-knot in $\mathbb{R}^4$ is called {\it trivial} if it is a connected sum of a finite number of standard surfaces in $\mathbb{R}^4$, and a surface-link is {\it trivial} if it is a split union of a finite number of trivial surface-knots.

\begin{figure}[ht]
\includegraphics*[height=6cm]{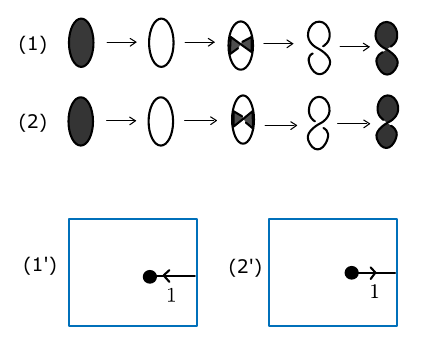}
\caption{(1) A standard positive projective plane $P_+$ and (2) a standard negative projective plane $P_-$.
Standard projective planes $P_+$ and $P_-$ are the plat closures of the knitted surfaces of degree 2 presented by 2-charts (1') and (2'), respectively.
 This figure is  \cite[Figure 41]{NY}. }
\label{20240330-2}
\end{figure}

When a half $\sigma$-edge has an orientation toward (respectively from) the black $\sigma$-vertex, we call it {\it positive} (respectively {\it negative}).
We denote by $T$ a standard torus, and by $P_+$ (respectively $P_-$) a standard positive (respectively negative) projective plane.

\begin{theorem}[\cite{NY}]\label{prop6-5}
Let $F$ be a trivial surface-knot, which is a connected sum of $g$ copies of $T$ and $m$ copies of $P_+$ and $n$ copies of $P_-$.
Then $F$ is the plat closure of the knitted surface presented by a $2$-chart $\Gamma$ such that $\Gamma$ consists of $\min\{m,n\}$ copies of a free $\sigma$-edge, $g$ copies of a free $\tau$-edge, 
and $|m-n|$ copies of a positive (respectively negative) half $\sigma$-edge if $m \geq n$ (respectively $m<n$).
\end{theorem}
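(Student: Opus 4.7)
The plan is to construct the chart $\Gamma$ explicitly as a disjoint union of elementary chart pieces, one for each summand of $F$, and then show that the plat closure of the resulting knitted surface is ambiently isotopic to $F$.

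First, I would establish the following dictionary between elementary chart pieces supported in a small subdisk of $B^2$ (with the chart empty elsewhere) and standard summands: an empty chart corresponds to the standard sphere $S^2$; one free $\tau$-edge corresponds to the standard torus $T$; one positive (respectively negative) half $\sigma$-edge meeting $\partial B^2$ corresponds to $P_+$ (respectively $P_-$); and one free $\sigma$-edge corresponds to $P_+ \# P_-$. Each identification is obtained by unpacking the motion-picture construction of Section~\ref{sec-knitted-surface} together with the plat-closure procedure of Section~\ref{sec4-3-pcl}. For a free $\tau$-edge, the two black $\tau$-vertex band surgeries at its endpoints, together with the $\tau_1$-slice traced along its interior and the capping wickets, realize a standardly embedded $1$-handle between the two sheets of $Q_2 \times B^2$, producing a standard torus. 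For a $\sigma$-vertex, the motion picture produces a simple branch point whose orientation convention (positive or negative) controls the sign of the normal Euler number of the resulting projective plane summand.

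Next, I would assemble $\Gamma$ by placing, in disjoint subdisks of $B^2$, exactly $g$ free $\tau$-edges, $\min\{m,n\}$ free $\sigma$-edges, and $|m-n|$ half $\sigma$-edges of the majority sign (positive if $m\geq n$, negative otherwise). Since the elementary pieces sit in disjoint subdisks and the chart is empty between them, the plat closure of the associated knitted surface $S(\Gamma)$ decomposes as a connected sum of the contributions of the pieces with the background sphere:
\[
\widetilde{S(\Gamma)} \;\cong\; g\,T \;\#\; \min\{m,n\}\,(P_+ \# P_-) \;\#\; |m-n|\, P_{\pm}.
\]
Combining $\min\{m,n\}$ copies of $P_+ \# P_-$ with $|m-n|$ copies of the majority $P_\pm$ gives $m$ copies of $P_+$ and $n$ copies of $P_-$, so $\widetilde{S(\Gamma)} \cong g\,T \# m\,P_+ \# n\,P_-$; by the standard classification of trivial surface-links, this surface is ambiently isotopic to $F$.

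The main obstacle is the first step, particularly the correct sign identification for half $\sigma$-edges: one must check that the orientation of the $\sigma$-edge at a black $\sigma$-vertex really produces $P_+$ rather than $P_-$ (and vice versa), which requires a direct computation of the normal Euler number of the branch-point neighborhood after plat closure. The torus case requires verifying that the tube produced by a free $\tau$-edge is actually unknotted (standardly embedded) rather than knotted, which follows because the entire construction is supported in a $4$-ball and involves only the local band-surgery model of Figure~\ref{figure-branch}. Once these elementary identifications are secured, the assembly in Step~2 is formal: the locality of the plat-closure construction and of the connected sum ensure that putting pieces in disjoint subdisks gives the connected sum of their individual plat closures.
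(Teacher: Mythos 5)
You should first be aware that the paper itself contains no proof of Theorem \ref{prop6-5}: it is quoted from \cite{NY}, so there is no in-paper argument to compare against. Judged on its own terms, your outline (a dictionary of elementary chart pieces plus an assembly into a connected sum) is the natural route and is consistent with the surrounding material (Proposition \ref{prop6-7} identifies the one-free-$\sigma$-edge chart with $P_+\#P_-$ by motion pictures, and the Euler-characteristic proposition confirms your count). But the two steps you treat as routine are exactly where the content lies. The dictionary entries are the special cases $g=1$, $m=1$, $n=1$, $m=n=1$ of the theorem itself, so they cannot be quoted; each must be proved by an explicit isotopy to standard position. Computing the normal Euler number of a branch-point neighborhood, which is your proposed check, only pins down \emph{which} standard surface the plat closure could be; it does not show the embedding is unknotted. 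What is needed is a motion-picture isotopy to the standard models in the spirit of Figures \ref{20241212-3} and \ref{20241212-1}, and likewise an argument that the tubes coming from the free $\tau$-edges form a \emph{simultaneously} trivial 1-handle system (individual triviality of each tube is not enough; cf.\ the appeal to \cite{KSS} and \cite{Kamada2014} elsewhere in the paper).

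The assembly step is also not formal as stated. The plat-closure cap $A_\beta$ is a single global surface determined by the boundary braid $\sigma_1^{\pm|m-n|}$: every half $\sigma$-edge contributes a half-twist to the \emph{same} capping band, so the $|m-n|$ half $\sigma$-edge pieces do not have disjoint supports after plat closure, and ``disjoint subdisks imply connected sum'' does not apply to them. You need either to treat all half $\sigma$-edges at once, showing that the degree-2 surface with $|m-n|$ coherently signed branch points capped by the $|m-n|$-times half-twisted band is the standard $\#^{|m-n|}P_\pm$, or to justify the decomposition genuinely, e.g.\ by exhibiting splitting 3-spheres between the subdisks or by the 1-handle-surgery induction used in the paper's proof of Theorem \ref{thm6-7}. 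Once these two points are supplied, the final regrouping $\min\{m,n\}(P_+\#P_-)\,\#\,|m-n|P_\pm = mP_+\#nP_-$ is indeed immediate.
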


In order to make this section of this paper self-contained, we show Theorem \ref{prop6-5} as follows; see \cite{NY}. 

\begin{proof}
First we observe that a standard 2-sphere is the plat closure of the 2-chart which is the empty graph. 
Next, by definition of the knitted surface $S(\Gamma)$ presented by a 2-chart $\Gamma$ and its plat closure, we see that a standard torus $T$ is the plat closure of the knitted surface presented by the 2-chart consisting of exactly one free $\tau$-edge. 
Let $\Gamma^+$ (respectively $\Gamma^-$) be the 2-chart consisting of exactly one positive (respectively negative) half $\sigma$-edge as given in Figure \ref{20240330-2}(1') (respectively (2')). 
Seeing Figure \ref{20240330-2}(1) and (2), we see that the plat closure of 
$S(\Gamma^{\pm})$ is $P_{\pm}$, respectively. 

For a pair of 2-charts in a disk, $\Gamma_1 \subset B_1^2$ and $\Gamma_2 \subset B_2^2$, let $J_1 \subset \partial B_1^2$ and $J_2 \subset \partial B_2^2$ be some pair of intervals which do not contain boundary points. We denote by $\Gamma_1 \cdot \Gamma_2$ the 2-chart in $B^2=B_1^2 \cup_{J_1=J_2} B_2^2$ obtained from $\Gamma_1$ and $\Gamma_2$ by identifying $J_1$ and $J_2$. Then, by construction, the plat closure of  $S(\Gamma_1\cdot \Gamma_2)$ is a connected sum of the plat closures of $S(\Gamma_1)$ and $S(\Gamma_2)$. 
Thus, the trivial surface-knot $S$ is presented by a $2$-chart $\Gamma$ consisting of $g$ copies of a free $\tau$-edge 
and $m$ (respectively $n$) copies of a positive (respectively negative) half $\sigma$-edge. When we have two opposite half $\sigma$-edges, then, applying (A2) and (A8)-moves, we change them into one free $\sigma$-edge. Applying this process for every pair of opposite half $\sigma$-edges, we have a 2-chart in the required form.
\end{proof}

 \begin{corollary}\label{cor5-5}
Any trivial surface-link is ambient isotopic to the plat closure of some knitted surface of degree $2$.
\end{corollary}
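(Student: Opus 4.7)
The plan is to apply Theorem~\ref{prop6-5} componentwise and then assemble the resulting $2$-charts into a single $2$-chart whose plat closure reproduces the prescribed split union. Write $F = F_1 \sqcup \cdots \sqcup F_k$ as a split union of trivial surface-knots. Theorem~\ref{prop6-5} supplies, for each $i$, a $2$-chart $\Gamma_i$ in a disk $B_i^2$ consisting of free $\sigma$-edges, free $\tau$-edges, and coherently oriented half $\sigma$-edges, such that the plat closure of $S(\Gamma_i)$ is ambiently isotopic to $F_i$.

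I would then construct the combined chart $\Gamma$ in $B^2$ by embedding the $B_i^2$'s as pairwise disjoint subdisks $R_1, \ldots, R_k \subset B^2$, enclosing each $R_i$ by a $\tau$-simple closed curve $C_i$, and installing a copy of $\Gamma_i$ inside $R_i$ so that any half $\sigma$-edges of $\Gamma_i$ terminate on $C_i$ as mixed trivalent vertices. Because the free $\tau$-edges of each $\Gamma_i$ do not subdivide $R_i$, the regions of the $\tau$-chart $\Gamma_\tau$ are precisely $R_1, \ldots, R_k$ together with an outer region $R_0 = B^2 \setminus \bigsqcup R_i$. Passing to a chart on $S^2$ via Remark~\ref{remark1223}, I would invoke the Proposition in Section~\ref{sec4-3-pcl}: the closed knitted surface in $D^2 \times S^2$ decomposes as a split union of plat closures indexed by the regions of $\Gamma_\tau$, and the $R_i$-contribution is $\tilde{S}(\Gamma_i) \simeq F_i$. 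This identifies the plat closure of $S(\Gamma)$ with $F_1 \sqcup \cdots \sqcup F_k = F$.

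The main technical obstacle is preventing the outer region together with Remark~\ref{remark1223}'s auxiliary free $\tau$-edge from contributing a spurious trivial component (a $2$-sphere or a torus, by Theorem~\ref{prop6-5}) to the split union. I would address this by placing the auxiliary $\tau$-edge inside one of the existing regions $R_i$ and then using a C-move from Section~\ref{sec7-1} to absorb it into the pre-existing chart structure there. In the degenerate case where no $\Gamma_i$ can accommodate such absorption (for instance, when every $F_i$ is a single $S^2$ or $\mathbb{R}P^2$ and thus has no free $\tau$-edge), I would instead handle the split union directly in $B^2$ by a chart of $k-1$ mutually disjoint $\tau$-arcs crossing $B^2$ together with the relevant free $\sigma$-edges and half $\sigma$-edges, whose plat closure splits by the hook-pair capping description of Section~\ref{sec4-3-pcl}.
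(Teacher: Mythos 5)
There is a genuine gap in your main construction. Enclosing each component chart $\Gamma_i$ by a $\tau$-simple closed curve $C_i$ creates, besides the regions $R_1,\ldots,R_k$, the outer region $R_0=B^2\setminus\bigsqcup_i \mathrm{Int}(R_i)$, and by the very splitting statement you invoke (the Proposition in Section \ref{sec4-3-pcl}) this region contributes its own nonempty split component to the plat closure: it contains no black $\sigma$-vertices and $\chi(R_0')=1-k$, so its contribution is a closed trivial surface of Euler characteristic $2-2k$ (already a torus when $k=1$, genus $k$ in general), not merely ``a 2-sphere or a torus''. Hence your chart presents $F$ together with an extra split component, which is not ambiently isotopic to $F$. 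Your proposed remedy does not address this: the auxiliary $\tau$-edge of Remark \ref{remark1223} is irrelevant here (the corollary concerns the plat closure, which is defined directly for degree-$2$ knitted surfaces, so no passage to a closed chart or to $S^2$ is needed), it cannot be ``placed inside one of the $R_i$'' since by definition it runs along $\partial B^2$, and no C-move absorbing a free $\tau$-edge elsewhere can delete the region $R_0$ or its contribution, which exists as soon as the circles $C_i$ do. (Your re-attachment of the half $\sigma$-edges of $\Gamma_i$ to $C_i$ as mixed trivalent vertices is also asserted rather than justified, though that is the lesser issue.)

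The repair is exactly your fallback, promoted to the whole proof, and it is the paper's argument: separate the components by $k-1$ disjoint $\tau$-edges whose endpoints are boundary points on $\partial B^2$ (e.g.\ the segments $I\times\{j/k\}$ in $B^2=I\times I$), so that $B^2$ is cut into $k$ strips and no outer region appears, and place the chart of Theorem \ref{prop6-5} for $F_i$ in the $i$-th strip unchanged, its half $\sigma$-edges still ending on $\partial B^2$ and its free $\tau$-edges kept for the genus. The plat closure then splits along these $\tau$-arcs into the plat closures of the $S(\Gamma_i)$, i.e.\ into $F_1\sqcup\cdots\sqcup F_k=F$, with no spurious piece and with no need for Remark \ref{remark1223}, the Euler characteristic proposition, or any C-moves.
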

 
 \begin{proof}
 Let $F$ be a trivial surface-link, and let $F_1, \ldots, F_m$  denote connected components of $F$. Let $l_j$ be the segment 
 $I \times \{j/m\}$ in $B^2 = I \times  I$ $ (j= 1,\ldots,m-1)$. Then, $l_1\ldots,l_{m-1}$ divide $B^2$ into $m$ $2$-disks 
 $E_j=I \times [(j-1)/m, j/m]$,  
  $j = 1,\ldots,m$. Applying Theorem \ref{prop6-5}, we obtain a $2$-chart $\Gamma_j$ in $E_j$ such that the plat closure of
 a knitted surface $S(\Gamma_j)$ is ambient isotopic to $F_j$. Let $\Gamma$ be the union of $\Gamma_j$ $(j=1, \ldots, m)$ and $\tau$-edges in the form of $\cup_{j=1}^{m-1} l_j$. Then, the plat closure of $S(\Gamma)$ is a split union of the plat closures of $S(\Gamma_j)$, $j=1, \ldots, m$, which is ambient isotopic to $F$. 
\end{proof}

\subsection{Knit plat index and the knit index}
The {\it plat index} \cite{Yasuda21} of a surface-link $F$, denoted by $\mathrm{Plat}(F)$, is the half of the minimum of the degrees of braided surfaces whose plat closures are ambient isotopic to $F$. We define the {\it knit plat index}, denoted by $\mathrm{knit.Plat}(F)$, as the half of the minimum of the degrees of knitted surfaces whose plat closure is ambient isotopic to $F$.
By definition, for a surface-link $F$,
\[
\mathrm{knit.Plat}(F)\leq \mathrm{Plat}(F).
\]

It is known \cite{Yasuda21} that the plat index of a surface-link is one if and only if it is a trivial 2-sphere or a nonorientable trivial surface-knot, and the plat index is two if the surface-link is a trivial orientable surface-knot with positive genus. %
Corollary \ref{cor5-5} implies the following:

\begin{corollary}\label{cor5-7}
Let $F$ be any trivial surface-link. Then, the knit plat index of $F$ is one:
\[
\mathrm{knit.Plat}(F)=1.
\]
\end{corollary}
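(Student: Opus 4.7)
The plan is to derive the equality by squeezing between two bounds. The upper bound $\mathrm{knit.Plat}(F) \leq 1$ comes essentially for free from Corollary \ref{cor5-5}: that corollary exhibits any trivial surface-link $F$ as the plat closure of a knitted surface of degree $2$, so by the definition of the knit plat index (half the minimum degree), $\mathrm{knit.Plat}(F) \leq 2/2 = 1$.

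For the matching lower bound $\mathrm{knit.Plat}(F) \geq 1$, I would appeal to the definition of the plat closure itself. A plat closure is constructed by capping off the boundary using wickets, which are pairs of endpoints joined by semi-circles, so the operation is only defined for knitted surfaces whose degree is an even positive integer $2n$ with $n \geq 1$. Consequently, the minimum degree of a knitted surface admitting a plat closure is at least $2$, and halving yields $\mathrm{knit.Plat}(F) \geq 1$ for every surface-link $F$ (in particular for trivial ones).

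Combining the two inequalities gives $\mathrm{knit.Plat}(F) = 1$, which is the claim. There is no real obstacle here: the content is entirely packed into the existence result Corollary \ref{cor5-5}, and the lower bound is forced by the very definition of plat closure. The only care required is to check that the degree-$2$ construction in the proof of Corollary \ref{cor5-5} really does produce a knitted surface whose plat closure is defined (i.e., adequate in the sense of Section \ref{sec4-3-pcl}); this is immediate because the chart $\Gamma$ used there lies in a single disk $B^2$ and the construction from Theorem \ref{prop6-5} automatically yields a knitted surface of degree $2$ whose boundary is already in the plat-closure form.
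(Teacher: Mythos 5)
Your proof is correct and matches the paper's (implicit) argument: the paper simply cites Corollary \ref{cor5-5} for the upper bound $\mathrm{knit.Plat}(F)\leq 1$, with the lower bound being automatic since the plat closure is only defined for knitted surfaces of even degree $2n\geq 2$. Your extra check of adequacy is harmless but unnecessary for degree $2$, since Section \ref{sec4-3-pcl} defines the plat closure for every knitted surface of degree $2$.
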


The {\it braid index} \cite{Kamada02} of an oriented surface-link $F$, denoted by $\mathrm{Braid}(F)$, is the minimum of the degrees of 2-dimensional braids whose closures are ambient isotopic to $F$. We define the {\it knit index} of a surface-link $F$, denoted by $\mathrm{Knit}(F)$, by the minimum of the degrees of 2-dimensional knits whose closures are ambient isotopic to $F$. By definition, for an oriented surface-link $F$,
\[\mathrm{Knit}(F) \leq \mathrm{Braid}(F). 
\]
By the argument in the proof of Theorem \ref{Theorem: Alexander theorem for surface-links}, we have the following. 

\begin{corollary}\label{cor:knit-index}
Let $F$ be any surface-link. Then, the knit index of $F$ is bounded from above by twice the plat index of $F$: 
\[
\mathrm{Knit}(F)\leq 2\mathrm{Plat}(F). 
\]
\end{corollary}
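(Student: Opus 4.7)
The plan is to carry out a direct degree-bookkeeping of the construction used in the proof of Theorem \ref{Theorem: Alexander theorem for surface-links}, in order to convert a plat-closure presentation of $F$ into a closure presentation without increasing the degree. Set $n := \mathrm{Plat}(F)$. By the definition of the plat index together with Theorem \ref{TheoremY}, there exists an adequate braided surface $S$ of degree $2n$ in $D^2 \times B^2$ whose plat closure $\widetilde{S}$ is ambiently isotopic to $F$.

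Next, I would apply the argument of Case 2 in the proof of Theorem \ref{Theorem: Alexander theorem for surface-links} to this $S$. That construction first arranges, after an ambient self-homeomorphism of $B^2$, that $\partial S \cap (D^2 \times I \times \{1\})$ is an adequate $2n$-braid $\gamma$ with $\mathrm{cl}(\gamma) = \partial S$. It then builds the auxiliary knitted surface $S'$ of \cite[Section 6.4.2]{NY} associated with $\gamma$, forms the knitted surface $S_1$ by pre-composing $\tau := \tau_1\tau_3\cdots\tau_{2n-1}$ with $S$ and gluing $S'$ on top, and assembles the intermediate surface-link $F_2$ from slices $\mathcal{D}$, $\mathrm{cl}(\tau)$, $\mathrm{cl}((S_1)_{[t]})$, $\mathrm{cl}(\tau)$, $\mathcal{D}$. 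Replacing the outer trivial disk systems $F_2^\pm$ by the model disk systems $\Delta^\pm$ of band-surgery type produces a surface-link $F_3$ that is ambiently isotopic to $F_2$, hence to $F$, and such that $S_2 := F_3 \cap (D^2 \times I \times [-3/2, 5/2])$ is a 2-dimensional knit with $\mathrm{cl}(S_2) = F_3$.

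The crucial point is the degree count. Every knit appearing as a slice of $S_2$---the trivial knit $e$, the knit $\tau = \tau_1\tau_3\cdots\tau_{2n-1}$, the products $\tau \cdot S_{[t]}$, and the intermediate slices $S'_{[t]}$ of the auxiliary knitted surface---has exactly $2n$ strings, since they are all built from the adequate $2n$-braid $\gamma$ and the $2n$-knit $\tau$. Consequently $S_2$ is a 2-dimensional knit of degree $2n$, and we conclude
\[
\mathrm{Knit}(F) \leq 2n = 2\,\mathrm{Plat}(F).
\]

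I do not expect a genuine obstacle here; the entire argument of Theorem \ref{Theorem: Alexander theorem for surface-links} is degree-preserving by design, so the corollary amounts to observing this preservation. The only item requiring care is the verification that the band-surgery step that turns the trivial disk systems $F_2^\pm$ into $\Delta^\pm$ (via $h(e;\mathcal{B}) = \tau$) acts within $2n$ strings rather than introducing additional components, which is immediate from the fact that $\tau$ itself is a $2n$-knit.
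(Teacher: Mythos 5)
Your proposal is correct and follows essentially the same route as the paper, whose proof of Corollary \ref{cor:knit-index} consists precisely of invoking the degree-preserving construction in the proof of Theorem \ref{Theorem: Alexander theorem for surface-links}; your explicit bookkeeping that every slice of $S_2$ (built from the adequate $2n$-braid $\gamma$ and the $2n$-knit $\tau$) remains a $2n$-knit is exactly the observation the paper leaves implicit.
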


By Corollary \ref{thm20241217}, we have the following.
\begin{corollary}
Let $F$ be a trivial surface-link. 
Then, if $F$ is not a trivial 2-sphere, then, 
\[
\mathrm{Knit}(F)=2.
\]
\end{corollary}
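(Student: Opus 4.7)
The plan is to verify $\mathrm{Knit}(F)=2$ by establishing matching upper and lower bounds. The upper bound $\mathrm{Knit}(F) \leq 2$ is immediate from Corollary \ref{thm20241217}: any trivial surface-link is ambiently isotopic to the closure of some $2$-dimensional knit of degree $2$.

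For the lower bound $\mathrm{Knit}(F) \geq 2$, the key step is to classify $2$-dimensional knits of degree $1$. Since the $n$-knit monoid $D_n$ is generated by $\sigma_i^{\pm 1}, \tau_i$ indexed by $i \in \{1,\ldots,n-1\}$, the monoid $D_1$ has no generators, hence consists only of the identity, represented by the trivial $1$-braid. Consequently, in Definition \ref{Definition: knitted surface in normal form} no transformation among Cases (1)--(3) is available when $n=1$, and every slice of a $2$-dimensional knit $S$ of degree $1$ coincides with this unique $1$-knit. Therefore $S$ is (equivalent to) the trivial knitted surface $Q_1 \times B^2$, which is a single standardly embedded $2$-disk.

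A direct computation of the closure then yields $\mathrm{cl}(Q_1 \times B^2) = Q_1 \times S^2$, a single $2$-sphere sitting as a parallel copy of the standard $2$-sphere $F_0^2$ inside the regular neighborhood $N_2 = D^2 \times S^2$, hence a trivial $2$-sphere in $\mathbb{R}^4$. Therefore, any surface-link with knit index $1$ must be a trivial $2$-sphere; contrapositively, if the trivial surface-link $F$ is not a trivial $2$-sphere, then $\mathrm{Knit}(F) \geq 2$. Combining with the upper bound gives $\mathrm{Knit}(F) = 2$.

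The argument is essentially mechanical; the only nuance worth flagging is that the hypothesis \textquotedblleft$F$ is not a trivial $2$-sphere\textquotedblright\ also rules out disconnected trivial surface-links (for instance a split union of two trivial $2$-spheres), since the unique closure of a degree-$1$ knit is a single connected $2$-sphere. No substantive obstacle is anticipated.
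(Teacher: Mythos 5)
Your proposal is correct and matches the paper's (essentially unstated) reasoning: the upper bound is exactly Corollary \ref{thm20241217}, and the lower bound comes from observing that a $2$-dimensional knit of degree $1$ can only close up to a trivial $2$-sphere, which the paper treats as immediate. The only point worth tightening is the step ``every slice is the trivial $1$-knit, therefore $S$ is the trivial knitted surface'': the slices are only isotopic to the trivial strand, so one should add that a degree-$1$ knitted surface admits no $\tau$- or branch-transformations and hence its closure is a section of $D^2\times S^2\to S^2$ (equivalently, use contractibility of the space of $1$-braids), which is isotopic to $Q_1\times S^2$ and thus a trivial $2$-sphere.
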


\subsection{Remark on 2-charts presenting trivial surface-knots} The trivial surface-knot $P_+ \# P_-$ is presented by a 2-chart other than that given in Theorem \ref{prop6-5}. 

\begin{proposition}\label{prop6-7}
The trivial surface-knot $P_+ \# P_-$ is ambient isotopic to the plat closure of the knitted surfaces of degree $2$ presented by the 2-charts as in Figure \ref{20241212-2}. 
\end{proposition}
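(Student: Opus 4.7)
My plan is to show that each 2-chart displayed in Figure \ref{20241212-2} is C-move equivalent to the canonical 2-chart of Theorem \ref{prop6-5} for the surface $P_+ \# P_-$. For $P_+ \# P_-$ we have $g=0$, $m=n=1$, so $\min\{m,n\}=1$ and $|m-n|=0$; hence the canonical chart consists of a single free $\sigma$-edge and nothing else. Once the C-move equivalence is established, the invariance noted at the end of Section \ref{sec7-1} guarantees that the plat closure of the knitted surface presented by the chart in Figure \ref{20241212-2} is ambiently isotopic to $P_+ \# P_-$.

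To carry this out, I would proceed in two phases. First, using Proposition \ref{thm6-3}, I would put each chart in Figure \ref{20241212-2} into normal form with a single initial $\tau$-component per region. Second, using the C-moves displayed in Figure \ref{20241211-1a} — in particular the moves sliding a mixed trivalent vertex along a $\tau$-edge, together with the moves trading a pair of oppositely oriented half $\sigma$-edges incident to a common $\tau$-component for a free $\sigma$-edge — I would push every $\sigma$-edge off the $\tau$-component and then contract the $\tau$-component entirely (since the genus contribution $g$ vanishes). What remains should be a single free $\sigma$-edge together with chart data that is isotopic to the empty chart.

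As a consistency check I would apply the Euler-characteristic formula $\chi(F_j)=2\chi(R_j')-b(\Gamma^j)$ from Section \ref{sec4-3-pcl} directly to the charts in Figure \ref{20241212-2} to confirm that the count yields $\chi=0$, as required for $P_+\#P_-$; non-orientability can be verified from the fact that at least one half $\sigma$-edge (or, equivalently, the surviving free $\sigma$-edge after reduction) contributes a twisted band in the motion picture associated with the chart.

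I expect the main obstacle to be exhibiting the explicit sequence of C-moves, especially the step that trades a pair of half $\sigma$-edges of opposite signs attached to a common $\tau$-component for a single free $\sigma$-edge: this move is the combinatorial reason $P_+\#P_-$ admits alternative chart presentations, and a careful motion-picture argument is likely needed to verify that the trace of the corresponding sequence of knit transformations does not change the topology. Once the equivalence with the canonical chart of Theorem \ref{prop6-5} is obtained for each chart shown in Figure \ref{20241212-2}, the proposition follows.
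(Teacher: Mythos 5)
Your strategy rests on the claim that the two charts of Figure \ref{20241212-2} are C-move equivalent to each other (equivalently, that the non-canonical one is C-move equivalent to the single free $\sigma$-edge chart of Theorem \ref{prop6-5}), and this is the gap: C-move equivalence is a much stronger relation than equality of plat closures. By Theorem 7.5 of \cite{NY}, C-move equivalent charts present ambiently isotopic \emph{knitted surfaces} in $D^2\times B^2$; the two charts in Figure \ref{20241212-2} present genuinely different knitted surfaces (one is a braided surface with two branch points and no $\tau$-edges, the other involves $\tau$-data and different boundary behaviour), and it is only after taking the plat closure that the resulting surface-knots agree. The specific moves you propose to use --- ``trading a pair of oppositely oriented half $\sigma$-edges on a common $\tau$-component for a free $\sigma$-edge'' and ``contracting the $\tau$-component entirely'' --- are not among the C-moves of Figure \ref{20241211-1a} and cannot be derived from them, precisely because they change the knitted surface (compare the unnumbered corollary after Proposition \ref{prop6-7}: the trade of free $\sigma$- and $\tau$-edges in the presence of a half $\sigma$-edge is stated only as an ambient isotopy of plat closures, not as a C-move equivalence of charts). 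You acknowledge that verifying your trade move would need ``a careful motion-picture argument,'' but at that point you are no longer doing chart combinatorics: you would be proving a new plat-closure-level relation, which is essentially the content of the proposition itself, so the argument is circular as it stands.

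The paper avoids this entirely: it handles the canonical chart by Theorem \ref{prop6-5}, and for the other chart it works directly in $\mathbb{R}^4$, deforming the standard motion picture of $P_+\# P_-$ (Figure \ref{20241212-3}(1)) by explicit ambient isotopies through Figures \ref{20241212-3}(2)--(3) until it visibly coincides with the plat closure of the knitted surface presented by that chart (Figure \ref{20241212-1}). If you want to salvage your approach, you would have to either (i) prove that the plat closure is invariant under an enlarged set of moves including your trade move --- which requires exactly the kind of motion-picture argument the paper gives --- or (ii) abandon the C-move route and argue geometrically as the paper does. Your Euler-characteristic check is a reasonable sanity test but of course does not distinguish $P_+\# P_-$ from the torus, so it cannot replace the isotopy argument.
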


\begin{figure}[ht]
\includegraphics*[height=3cm]{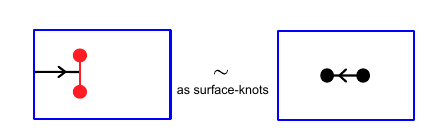}
\caption{A pair of $2$-charts, the plat closures of whose presenting knitted surfaces of degree 2 are ambient isotopic as surface-knots to $P_+\# P_-$, where we omit labels. The right chart is the chart given in Theorem \ref{prop6-5}.}
\label{20241212-2}
\end{figure}

\begin{proof}
The trivial surface-knot $P_+\# P_-$ has the motion picture as in Figure \ref{20241212-3}(1). The surface is ambient isotopic to the surface as in Figure \ref{20241212-3}(2). This surface is ambient isotopic to the surface as in Figure \ref{20241212-3}(3), which is the surface given by the gray regions of the 2-charts in Figure \ref{20241212-1}; so it is the plat closure of the knitted surface of degree 2 presented by the $2$-chart in the left side  of Figure \ref{20241212-2}. 
By Theorem \ref{prop6-5}, $P_+\# P_-$ is ambient isotopic to the plat closure of the knitted surface presented by the $2$-chart in the right side of Figure \ref{20241212-2}. 
Thus the $2$-charts as in Figure \ref{20241212-2} both present $P_+\# P_-$. 
\end{proof}

\begin{figure}[ht]
\includegraphics*[height=5cm]{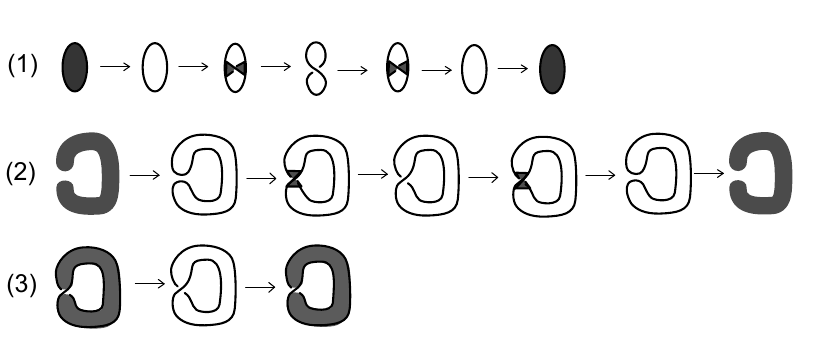}
\caption{Motion pictures of $P_+ \# P_-$.}
\label{20241212-3}
\end{figure}

\begin{figure}[ht]
\includegraphics*[height=3cm]{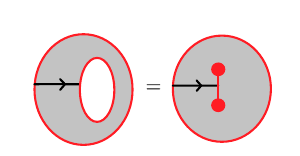}
\caption{The gray regions of the $2$-charts present the same surface of Figure \ref{20241212-3}(3). }
\label{20241212-1}
\end{figure}

Let $F$ be a trivial surface-knot, which is a connected sum of $g$ copies of $T$ and $m$ copies of $P_+$ and $n$ copies of $P_-$. 
If $m \neq n$, then $F$ is a connected sum of $\min\{m,n\}+g$ copies of $T$ and $|m-n|$ copies of $P_+$ (respectively $P_-$) if $m>n$ (respectively $m<n$). 
We can show this by using $2$-charts, applying Theorem \ref{prop6-5}, Proposition \ref{prop6-7} and C-moves. The proof is left to the reader as exercise. In the following, we give the result in terms of $2$-charts. 

\begin{corollary}
Let $\Gamma$ be a $2$-chart which is a disjoint union of  $\mu$  copies of a free $\sigma$-edge, $\nu$  copies of a free $\tau$-edge and   $\lambda$ copies of a positive (respectively negative) half $\sigma$-edge. If $\lambda \neq 0$, then the plat closure of the knitted surface of degree 2 presented by $\Gamma$ is ambient isotopic to the plat closure of the knitted surface of degree 2 presented by the $2$-chart $\Gamma'$ which is a disjoint union of  $\mu+\nu$ copies of a free $\tau$-edge and  $\lambda$ copies of a positive (respectively negative) half $\sigma$-edge. 
\end{corollary}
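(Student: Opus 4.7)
The plan is to identify the homeomorphism type of the trivial surface-knot presented by $\Gamma$, apply the classical surface-classification fact (Dyck's theorem) recalled in the paragraph preceding the corollary, and then read off the canonical chart from Theorem \ref{prop6-5}.

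First, applying Theorem \ref{prop6-5} in the degenerate cases $(m,n,g)=(1,1,0)$, $(0,0,1)$, and $(1,0,0)$, one identifies each kind of edge with its topological contribution: a free $\sigma$-edge presents $P_+ \# P_-$, a free $\tau$-edge presents a torus $T$, and a positive (respectively negative) half $\sigma$-edge presents $P_+$ (respectively $P_-$). Since the $\tau$-chart $\Gamma_\tau$ consists only of arcs coming from the $\nu$ free $\tau$-edges, it has a single region in $B^2$, so by the Proposition of Section \ref{sec4-3-pcl} combined with Theorem \ref{thm6-7} the plat closure $F := \widetilde{S(\Gamma)}$ is a single trivial surface-knot whose homeomorphism type is the connected sum of the edgewise contributions:
\[
F \;\cong\; (P_+ \# P_-)^{\# \mu} \,\#\, T^{\# \nu} \,\#\, P_+^{\# \lambda}.
\]

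Next, using the Dyck relation $P_+ \# P_- \# P_+ \cong T \# P_+$ and the hypothesis $\lambda \geq 1$, I would inductively absorb each of the $\mu$ copies of $P_+ \# P_-$ together with one $P_+$, converting it into a torus summand and leaving the $P_+$ behind. After $\mu$ such absorptions,
\[
F \;\cong\; T^{\# (\mu+\nu)} \,\#\, P_+^{\# \lambda}.
\]
Applying Theorem \ref{prop6-5} to this surface with $g = \mu+\nu$, $m = \lambda$, $n = 0$ yields precisely the chart $\Gamma'$ described in the statement (namely $\min\{\lambda,0\}=0$ free $\sigma$-edges, $\mu+\nu$ free $\tau$-edges, and $\lambda$ positive half $\sigma$-edges). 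Since trivial surface-knots of the same homeomorphism type are ambiently isotopic in $\mathbb{R}^4$, the plat closures of $S(\Gamma)$ and $S(\Gamma')$ are ambiently isotopic. The negative case is identical after exchanging $P_+$ and $P_-$.

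The main obstacle I anticipate is rigorously justifying the connected-sum decomposition of $F$ in terms of the individual edges of $\Gamma$, i.e., that distributing the $\mu$ free $\sigma$-edges, $\nu$ free $\tau$-edges, and $\lambda$ half $\sigma$-edges throughout the single region of $\Gamma_\tau$ really yields the claimed iterated connected sum, independently of the arrangement of the edges. This reduces to a local analysis near each black vertex and free $\tau$-edge using the motion-picture descriptions of Section \ref{sec-knitted-surface}, or alternatively to iterated applications of Proposition \ref{prop6-7} and C-moves to isolate the edges in mutually disjoint sub-disks in the style of the proof of Corollary \ref{cor5-5}.
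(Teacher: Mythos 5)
Your strategy---identify the ambient isotopy type of the plat closure of $S(\Gamma)$, rewrite it by a Dyck-type relation, and return to a chart via Theorem \ref{prop6-5}---is genuinely different from the paper's intended route, which is a chart-level argument using Theorem \ref{prop6-5}, Proposition \ref{prop6-7} and C-moves (the paper leaves it as an exercise). As written, however, your argument has a real gap at the passage from homeomorphism type to ambient isotopy. The principle ``trivial surface-knots of the same homeomorphism type are ambiently isotopic in $\mathbb{R}^4$'' is false: $P_+$ and $P_-$ are homeomorphic but not ambiently isotopic, and $P_+ \# P_+$ and $P_+ \# P_-$ are homeomorphic Klein bottles with normal Euler numbers $4$ and $0$, hence not ambiently isotopic. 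This failure matters in exactly the situation of the corollary: if your principle held, the chart with two positive half $\sigma$-edges and the chart with one positive and one negative half $\sigma$-edge would have ambiently isotopic plat closures, which is false. So Dyck's theorem for abstract surfaces cannot supply the required isotopy; what you actually need is the $4$-dimensional statement that $T \# P_\epsilon$ and $P_+ \# P_- \# P_\epsilon$ are ambiently isotopic as standard surface-knots. That statement is precisely the content of the paragraph preceding the corollary, which the paper proposes to prove by charts---so invoking it (or replacing it by the abstract classification) leaves the essential content unproved, or circular relative to the corollary itself.

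Two further remarks. First, your identification of the homeomorphism type of $\widetilde{S(\Gamma)}$ from the Euler-characteristic proposition together with Theorem \ref{thm6-7} is also insufficient, since Euler characteristic and triviality determine neither orientability nor the normal Euler number. The cleaner step is to apply Theorem \ref{prop6-5} at full strength with $(g,m,n)=(\nu,\mu+\lambda,\mu)$ for $\Gamma$ and $(g,m,n)=(\mu+\nu,\lambda,0)$ for $\Gamma'$: after noting that any two charts with the prescribed edge counts are isotopic in $B^2$, this identifies the two plat closures, up to ambient isotopy, with the connected sum of $\nu$ copies of $T$, $\mu+\lambda$ copies of $P_+$ and $\mu$ copies of $P_-$, respectively $\mu+\nu$ copies of $T$ and $\lambda$ copies of $P_+$; in particular your anticipated ``obstacle'' about distributing edges disappears, and the corollary reduces exactly to the ambient-isotopy Dyck-type relation above. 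Second, the repair you sketch at the end---isolating a free $\sigma$-edge next to a half $\sigma$-edge and trading it for a free $\tau$-edge using C-moves and the two chart presentations of $P_+ \# P_-$ in Proposition \ref{prop6-7}---is the paper's intended proof; carrying it out proves the needed isotopy rather than assuming it, and would close the gap.
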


\subsection{Proof of Theorem \ref{thm20241213} and Corollary \ref{thm20241217}}

Let $F$ be a surface-link in $\mathbb{R}^4$. 
A {\it 1-handle attached to $F$} is a 3-ball $b=D^2 \times I$ embedded in $\mathbb{R}^4$ such that $b \cap F=(D^2 \times \partial I) \cap F$. The arc $\{0\} \times I \subset D^2 \times I$ is called the {\it core} of the 1-handle. The {\it 1-handle surgery} is the operation which changes $F$ to a new surface-link $F'$ given by 
$F'=\mathrm{Cl}(F \backslash (D^2 \times \partial I) \cup (\partial D^2 \times I))$. 
A 1-handle $b$ attached to $F$ is called {\it trivial} if we can move the 1-handle $b$ by ambient isotopy so that $b$ is contained in a 4-ball $U$ in $\mathbb{R}^4$ such that the pair $(U, F \cap U)$ is homeomorphic to the standard $(4,2)$-ball pair $(B^4, B^2)$. 
   
\begin{theorem}\label{thm6-7}
Let $F$ be the plat closure of an arbitrary knitted surface of degree 2. Then, $F$ is a trivial surface-link.
\end{theorem}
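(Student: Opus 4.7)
The plan is to reduce the $2$-chart $\Gamma$ presenting the given knitted surface to the canonical form of Theorem \ref{prop6-5}, at which point the plat closure is manifestly a connected sum of standard surfaces. The entire reduction takes place at the chart level, using that $C$-move equivalent $2$-charts present knitted surfaces with ambient isotopic plat closures (noted at the end of Section \ref{sec7-1}).

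First, apply Proposition \ref{thm6-3}, together with the strengthening in the subsequent Remark, to replace $\Gamma$ by a $C$-move equivalent chart in normal form in which every $\sigma$-edge is either free or attached coherently to the initial $\tau$-component of its region. The $\tau$-chart $\Gamma_\tau$ of the resulting normal form divides $B^2$ into regions $R_1, \ldots, R_m$, and by the splitting principle behind the Euler characteristic proposition stated earlier (equivalently, by passing to a closure presentation as in Remark \ref{remark1223}), the plat closure decomposes as a split union $F_1 \sqcup \cdots \sqcup F_m$, where each $F_j$ is determined by the subchart $\Gamma^j = \Gamma \cap R_j$. Since a split union of trivial surface-knots is trivial by definition, it suffices to show that each $F_j$ is a trivial surface-knot.

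For a single region $R_j$, the normal form leaves an initial $\tau$-component $G_j$ together with free $\sigma$- and $\tau$-edges, possibly additional $\tau$-components linked to $G_j$ by $\sigma$-edges, and half $\sigma$-edges of coherent orientation attached to $G_j$ through mixed trivalent vertices. Using further $C$-moves from Figure \ref{20241211-1a}, I would (a) collapse $G_j$ to a single simple closed $\tau$-curve by successively removing its trivalent $\tau$-vertices, and (b) absorb each auxiliary $\tau$-component by sliding the connecting $\sigma$-edge along $G_j$ until the two circles merge. The resulting subchart is precisely of the type described in Theorem \ref{prop6-5}, so $F_j$ is a connected sum of tori and standard projective planes, and hence is trivial.

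The main obstacle is this simplification step inside each region, and in particular verifying that the $C$-moves can eliminate every trivalent $\tau$-vertex and merge every auxiliary $\tau$-component while preserving coherence of orientations on the half $\sigma$-edges. I expect this to proceed by induction on the total number of mixed trivalent vertices plus the number of auxiliary $\tau$-components, with each inductive step a short case analysis that selects the appropriate move from Figure \ref{20241211-1a} (or its orientation-reversed variant) at an edge of $G_j$. Proposition \ref{prop6-7} is a helpful reminder that distinct-looking chart configurations can represent the same trivial surface-knot, so some flexibility is needed when matching the simplified chart to the canonical list in Theorem \ref{prop6-5}.
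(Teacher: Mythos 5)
Your overall skeleton (pass to a normal form, split along the $\tau$-chart into regions, handle each region) matches the paper's setup, but the core of your argument is left as an expectation, and it is exactly the point where the paper does something different. You claim that further C-moves from Figure \ref{20241211-1a} can (a) remove every trivalent $\tau$-vertex of the initial $\tau$-component and (b) merge every auxiliary $\tau$-component into it by sliding the connecting $\sigma$-edge, so that the subchart becomes the canonical chart of Theorem \ref{prop6-5}. No such reduction is established anywhere in the paper (Proposition \ref{thm6-3} only rearranges $\sigma$-edges; it does not merge $\tau$-components or kill trivalent $\tau$-vertices), and your ``I expect this to proceed by induction with a short case analysis'' is precisely the missing content: a $\sigma$-edge joining two $\tau$-components corresponds to a tube (1-handle) joining two closed pieces of the plat closure, and there is no listed chart move that converts such a configuration into a single $\tau$-circle. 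Asserting that C-moves achieve this amounts to asserting a normal-form/classification statement for 2-charts that is stronger than the theorem itself.

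The paper avoids this by a topological, not combinatorial, argument. It changes each non-half $\sigma$-edge of $\Gamma$ into a $\tau$-edge to get $\Gamma'$; the plat closure $F'$ of $S(\Gamma')$ is then a split union of surfaces covered by Theorem \ref{prop6-5}, hence trivial, and $F$ is recovered from $F'$ by 1-handle surgeries along handles whose cores cross the replaced edges. For one such edge, either the handle joins two components (giving a connected sum), or it is attached to a single trivial surface-knot, in which case it is a \emph{trivial} 1-handle by the cited result \cite{Kamada2014}, so surgery yields a connected sum with $T$ or with $P_+\#P_-$ (this is where Figures \ref{20241212-3}, \ref{20241212-1} and Proposition \ref{prop6-7} actually enter, rather than as the side remark you make of them). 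Induction on the number of non-half $\sigma$-edges finishes the proof. The triviality of 1-handles attached to trivial surface-knots is the essential external input; your proposal has no substitute for it, so as written it has a genuine gap at its main step.
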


\begin{proof}
Let $\Gamma$ be an arbitrary $2$-chart. 
It suffices to show the result for the subchart of each region divided by the $\tau$-chart $\Gamma_\tau$ of $\Gamma$. 
So we assume that $\Gamma \subset B^2$ is 
in a normal form such that $B^2\backslash \Gamma_\tau$ is connected. 
We call a $\sigma$-edge each of whose endpoints is either a boundary point or a mixed trivalent vertex a {\it conjunctive $\sigma$-edge}. 
If $\Gamma$ has no conjunctive $\sigma$-edge, Theorem \ref{prop6-5} implies that $F$ is a trivial surface-knot. 
Let $l_1, \ldots, l_s$ ($s>0$) be segments in $B^2$ such that each $l_j$ coincides with each conjunctive $\sigma$-edge. 
Since we take the plat closure, we regard $\partial B^2$ as the initial $\tau$-component.

Let $\Gamma'$ be the 2-chart obtained from $\Gamma$ by changing each conjunctive $\sigma$-edge into a $\tau$-edge. Let $F$ (respectively $F'$)  be the plat closure of the knitted surface of degree 2 presented by $\Gamma$ (respectively $\Gamma'$).
We see that $F'$ is a split union of the plat closures of  knitted surfaces of degree 2 presented by $2$-charts of regions divided by the $\tau$-chart $\Gamma'_{\tau}$ of $\Gamma'$, and $F$ is obtained from $F'$ by 1-handle surgery along 1-handles $b_1, \ldots, b_s$ such that the projection to $B^2$ of the core of $b_j$ transversely intersects the segment $l_j$ $(j=1, \ldots, s)$. We remark that since $\Gamma$ is in a normal form, each region divided by $\Gamma'_{\tau}$ is a 2-disk. We consider the case $s=1$, when the number of conjunctive $\sigma$-edges is one. We put $b:=b_1$, the 1-handle along which $F$ is obtained from $F'$ by 1-handle surgery. 
The plat closure $F$ is a surface-knot, and $F'$ is either a surface-knot or a surface-link with two components. 
By Theorem \ref{prop6-5}, using (T2)-moves (see Figure \ref{20241211-1a}) if necessary, we see that $F'$ is a trivial surface-knot/link. If $F'$ is a surface-link with two components, denoted by $F_1'$ and $F_2'$,  then, $F$ is a connected sum of $F_1'$ and $F_2'$. If $F'$ is a surface-knot, then, since $F'$ is trivial, the 1-handle $b$ is trivial  \cite{Kamada2014}. 
Hence, the result of 1-handle surgery, $F$, is a connected sum of $F'$ and either a standard torus $T$ or a connected sum $P_+ \# P_-$ (see Figures \ref{20241212-3}(3) and \ref{20241212-1}). For the general case, since $F'$ is a trivial surface-link by Theorem \ref{prop6-5}, by induction on the number $s$ of conjunctive $\sigma$-edges, we see that $F$ is obtained from components of $F'$ by taking connected sum with several copies of $T$ and $P_+ \# P_-$. Hence we see that $F$ is a trivial surface-knot. Thus we have the required result.
\end{proof}

\begin{proof}[Proof of Theorem \ref{thm20241213}]
By Corollary \ref{cor5-5} and Theorem~\ref{thm6-7}, we have the required result. 
\end{proof}

\begin{proof}[Proof of Corollary \ref{thm20241217}]
 By Remark \ref{remark1223}, 
Corollary \ref{cor5-5} implies that any trivial surface-link is ambient isotopic to the closure of some 2-dimensional knit of degree 2. 
Let $F$ be a 2-dimensional knit of degree 2, and 
let $\Gamma$ be a 2-chart in $B^2$ which presents $F$, that is, $\Gamma$ satisfies $\Gamma \cap \partial B^2=\emptyset$. 
If $\Gamma$ contains at least one $\tau$-edge $\mathbf{e}$, we denote by $\Gamma_\tau^*$ the $\tau$-component of the $\tau$-chart $\Gamma_\tau$ containing $\mathbf{e}$. We see that $(B^2/\partial B^2)\backslash \Gamma_\tau^*=S^2\backslash \Gamma_\tau^*$ consists of connected components $D_1, \ldots, D_s$ for some $s>0$, such that $D_j$ $(j=1,\ldots, s)$ is regarded as an open disk. Then, the closure of $F$ is the split union of the plat closures of knitted surfaces of degree 2 presented by $\Gamma \cap \mathrm{Cl}(D_j) \subset \mathrm{Cl}(D_j)$ $(j=1, \ldots, s)$. We remark that $\Gamma \cap \mathrm{Cl}(D_j)$ may not present a 2-dimensional knit of degree 2. Theorem \ref{thm20241213} implies that the plat closures are trivial surface-links; thus $F$ is a trivial surface-link. 
If $\Gamma$ does not contain $\tau$-edges, then it presents a 2-dimensional braid, the closure of which is an orientable trivial surface-link \cite{Kamada02}. 
\end{proof}

\section*{Acknowledgements}
Gratitude is expressed to the referee for kindly reviewing this paper. The first author would like to thank Professor Masahiko Saito for suggesting the topic in Section 5.5, and Professor Takashi Ichikawa for his helpful advice. 
The first author was partially supported by JST FOREST Program, Grant Number JPMJFR202U. The second author was partially supported by JSPS KAKENHI Grant Number 22J20494.

\end{document}